\title[Chow groups of biquaternion SB-varieties]{On the Chow groups of a biquaternion Severi--Brauer variety}
\author{Eoin Mackall}
\email{eoinmackall \emph{at} gmail.com}
\urladdr{\url{www.eoinmackall.com}}
\date{\today}
\keywords{Chow groups; Severi--Brauer variety}
\subjclass[2010]{14C25}
\newtheorem{thm}{Theorem}[section]
\newtheorem{prop}[thm]{Proposition}
\newtheorem{cor}[thm]{Corollary}
\newtheorem{lem}[thm]{Lemma}
\theoremstyle{definition}
\newtheorem{exmp}[thm]{Example}
\newtheorem{rmk}[thm]{Remark}
\newcounter{item}
\newcommand{\HH}{\mathrm{H}}
\newcommand{\CH}{\mathrm{CH}}
\newcommand{\CT}{\mathrm{CT}}
\newcommand{\Q}{\mathrm{Q}}
\newcommand{\SB}{\mathbf{SB}}
\newcommand{\Hilb}{\mathbf{Hilb}}
\begin{document}
	\begin{abstract}
	 We provide an alternative proof that the Chow group of $1$-cycles on a Severi--Brauer variety associated to a biquaternion division algebra is torsion-free. There are three proofs of this result in the literature, all of which are due to Karpenko and rely on a clever use of $K$-theory. The proof that we give here, by contrast, is geometric and uses degenerations of quartic elliptic normal curves.
	\end{abstract}
	\maketitle
	
\section{Introduction}

That Chow groups of $1$-cycles on a Severi--Brauer variety associated to a biquaternion division algebra are torsion-free was first observed by Karpenko \cite{MR1386650}. More accurately, this statement is just one corollary of Karpenko's analysis of the topological filtration on the Grothendieck ring of a Severi--Brauer variety associated to a decomposable central simple algebra whose index and exponent differ by a squarefree factor. There Karpenko showed that the entire graded ring associated to the topological filtration was torsion-free for these varieties; the claim that these Chow groups are torsion-free then follows from the Grothendieck-Riemann-Roch without denominators comparing the Chow ring with this graded ring \cite[Example 15.3.6]{MR1644323}.

The computation of these Chow groups, showing in particular that they are torsion-free, has appeared as a consequence of more general results twice since Karpenko's work in \cite{MR1386650}. It appeared next as a consequence of the results of \cite{MR1615533}, which generalize and simplify the theorems of \cite{MR1327303}, on the topological filtration of the Grothendieck ring of a Severi--Brauer variety associated to a central simple algebra with certain 2-primary reduced behavior. It also falls out of Karpenko's recent computation of the Chow ring of a generic Severi--Brauer variety associated to a central simple algebra of index 4 and exponent 2 given in \cite{MR3581317}; see Remark \ref{rmk: simple} below for a short proof along these lines.

Here we give another proof of the fact that the Chow group of $1$-cycles on a Severi--Brauer variety associated with a biquaternion algebra are torsion-free, see Theorem \ref{thm: kar}. The proof goes as follows. First, we identify a collection of generators for these Chow groups using some results from \cite{MR4280495}; this step requires the Grothendieck-Riemann-Roch without denominators theorem \cite[Example 15.3.6]{MR1644323}. Second, we construct some explicit cycles that represent the classes of these generators; one of these cycles is characterized as having geometrically linear components, it is a geometrically a union of two pairs of skew lines forming a 4-gon of lines in 3-space, and the other is geometrically an elliptic normal curve. Finally, we construct relations between these cycles by showing that we can degenerate one into the other.

The main impetus for \textit{re}-proving this theorem was to develop new techniques for studying particular Severi--Brauer varieties. Karpenko's methods, despite having been applied very successfully in the study of generic Severi--Brauer varieties, are limited by the fact that they don't apply to arbitrary cycles. The proof that we give here is a step in this direction: we show how one can determine relations between explicit cycles using only the geometry of a Severi--Brauer variety. The more difficult problem of finding an explicit collection of cycles that are both usable and which generate the Chow group of a Severi--Brauer variety (in any positive dimension and of codimension two or more) -- without appealing to the Grothendieck-Riemann-Roch without denominators theorem -- is still open.\\

\noindent\textbf{Acknowledgments}. I want to thank Danny Krashen for explaining to me the proof of Proposition \ref{prop: rtriv}. I'd also like to thank an anonymous referee for both a careful reading of this text and for the proof contained in Remark \ref{rmk: rmkref}. Any errors contained in these proofs are now my own.

\section{Preliminaries}\label{sec: pre}
Throughout this text we fix an arbitrary field $k$ that we will use as a base. We let $A$ be an arbitrary central simple $k$-algebra and we write $X=\SB(A)$ for the associated Severi--Brauer variety, i.e.\ $X$ is the subvariety of $\mathbf{Gr}(\mathrm{deg}(A),A)$ whose $R$-points, for any finite type $k$-algebra $R$, are those projective $R$-module summands of $A\otimes_k R$ which have rank $\mathrm{deg}(A)$ and are also right ideals of $A\otimes_k R$.

We write $\zeta_X$ for the tautological sheaf on $X$, i.e.\ for the restriction of the universal subsheaf on $\mathbf{Gr}(\mathrm{deg}(A),A)$ to $X$. Given an integer $i\geq 0$, and a simple left $A^{\otimes i}$-module $M_i$, we write $\zeta_X(i)$ for the tensor product $\zeta_X^{\otimes i}\otimes_{A^{\otimes i}} M_i$; note that $M_i$ is unique up to isomorphism so that $\zeta_X(i)$ is as well. We write $\CT(X)\subset \CH(X)$ for the subring of the integral Chow ring generated by all Chern classes of $\zeta_X(1)$; it is canonically graded with summands $\CT^i(X)$ and $\CT_i(X)$ contained in $\CH^i(X)$ and $\CH_i(X)$ respectively. The following result gives a complete description of the ring structure of $\CT(X)$. (We recall that the degree of the class of a cycle $\xi$ in the Chow ring $\CH(X)$ of a Severi--Brauer variety $X$ is defined to be the degree of the pullback $\xi_{k^{alg}}$ considered in $\CH(X_{k^{alg}})$ for an algebraic closure $k^{alg}$ of the base field $k$.) 

\begin{thm}[{\cite[Proposition 3.3]{MR3581317}}]\label{thm: karct}
Assume that $A$ is a division algebra which has index $\mathrm{ind}(A)=p^n$ for some prime $p$ and any $n\geq 0$. Then $\CT^i(X)=\mathbb{Z}$ for any $0\leq i \leq \mathrm{deg}(A)-1$ with generators \[c_i(\zeta_X(1)) \quad \mbox{and} \quad c_1(\zeta_X(1))^i\]
having degrees $\mathrm{deg}(c_i(\zeta_X(1)))=\binom{p^n}{i}$ and $\mathrm{deg}(c_1(\zeta_X(1))^i)=p^{ni}$.
\end{thm}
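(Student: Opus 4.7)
The plan is to establish the theorem by pullback to an algebraic closure $k^{\mathrm{alg}}$. Over $k^{\mathrm{alg}}$, the algebra $A$ splits as $M_{p^n}(k^{\mathrm{alg}})$, the variety $X_{k^{\mathrm{alg}}}$ is identified with $\mathbb{P}^{p^n-1}$, and the tautological rank-$p^n$ sheaf $\zeta_X(1)$---which coincides with $\zeta_X$ itself, since for a division algebra $A$ the unique simple left $A$-module is $A$---splits as $\mathcal{O}(-1)^{\oplus p^n}$. The splitting principle then yields
\[c_j(\zeta_X(1))_{k^{\mathrm{alg}}}=\binom{p^n}{j}(-h)^j\in\CH^j(\mathbb{P}^{p^n-1}),\]
where $h$ denotes the hyperplane class. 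The degree assertions follow at once: $c_i(\zeta_X(1))$ pulls back to $\binom{p^n}{i}(-h)^i$, of degree $\binom{p^n}{i}$, while $c_1(\zeta_X(1))^i$ pulls back to $(-p^n h)^i$, of degree $p^{ni}$.

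For the structural claim that $\CT^i(X)\cong\mathbb{Z}$ in the range $0\leq i\leq p^n-1$, I would observe that $\CT(X)$ is by construction generated as a ring by the finitely many Chern classes $c_1(\zeta_X(1)),\ldots,c_{p^n}(\zeta_X(1))$, so each graded piece $\CT^i(X)$ is a finitely generated abelian group. The pullback map $\CT^i(X)\to\CH^i(\mathbb{P}^{p^n-1})=\mathbb{Z}\cdot h^i$ has nonzero image in this range (containing, for instance, the pullback of $c_1(\zeta_X(1))^i$), so it will suffice to show that this pullback is injective on $\CT^i(X)$: that would identify $\CT^i(X)$ with a nontrivial subgroup of $\mathbb{Z}$, hence with $\mathbb{Z}$ itself.

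The injectivity is the main obstacle. My strategy is to transfer the question into $K$-theory: use Grothendieck--Riemann--Roch without denominators \cite[Example 15.3.6]{MR1644323} to relate $\CT^*(X)$ to the associated graded of the topological filtration on $K_0(X)$, and then invoke Quillen's decomposition $K_0(X)\cong\bigoplus_{j=0}^{p^n-1}K_0(A^{\otimes j})$, in which each summand is $\mathbb{Z}$ because every $A^{\otimes j}$ is a central simple $k$-algebra. This presents $K_0(X)$ as a free abelian group of rank $p^n$, with generators (the classes of simple $A^{\otimes j}$-modules) that are defined over $k$. The key technical step is to verify that these generators sit in codimension exactly $j$ with respect to the topological filtration, so that each graded piece is a single $\mathbb{Z}$; once this is in hand, the compatibility of the whole picture with base change to $\mathbb{P}^{p^n-1}$---where the topological filtration has the standard description in terms of classes of linear subspaces---yields the desired injectivity of the pullback on each $\CT^i(X)$.
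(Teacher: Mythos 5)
Your opening reductions are fine: the computation of the degrees over $k^{alg}$ and the observation that everything follows once the restriction map $\CT^i(X)\to \CH^i(X_{k^{alg}})=\mathbb{Z}$ is shown to be injective (together with a small $p$-adic valuation argument, which you omit, showing that the images of $c_i(\zeta_X(1))$ and $c_1(\zeta_X(1))^i$ already generate the image of all Chern monomials). The problem is the step you yourself flag as the key one. You propose to prove injectivity by showing that each graded piece of the topological filtration on $K_0(X)$ is a single copy of $\mathbb{Z}$. In the generality of the theorem --- an arbitrary division algebra of index $p^n$ --- this is false, not merely unproven: by Karpenko's torsion results, for many $p$-primary division algebras (e.g.\ generic algebras of index $p^\alpha$ and exponent $p^\beta$ with $\beta<\alpha$, outside the biquaternion case) the group $\CH^2(X)\cong \mathrm{gr}^2$ of the topological filtration has nontrivial torsion; indeed, torsion-freeness of that graded ring is exactly the special phenomenon (biquaternion, index/exponent differing by a squarefree factor, etc.) discussed in the introduction of this paper. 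The theorem you are proving survives only because it concerns the Chern subring $\CT(X)$, which is much smaller than $\CH(X)$ or $\mathrm{gr}\,K_0(X)$, so any strategy that routes through torsion-freeness of the full topological graded ring cannot work. Two further points: the phrase ``the generators sit in codimension exactly $j$'' is not meaningful as stated, since the classes $[\zeta_X(j)]$ are classes of vector bundles and lie in filtration level $0$ --- identifying which integral combinations lie deep in the filtration is precisely the hard content of Karpenko's $K$-theoretic papers; and even in cases where $\mathrm{gr}$ is torsion-free, Grothendieck--Riemann--Roch without denominators only says the kernel of $\CH^i(X)\to \mathrm{gr}^i K_0(X)$ is killed by $(i-1)!$, so for $i\geq 3$ you would still not get injectivity of $\CT^i(X)\to \CH^i(X_{k^{alg}})$.

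For comparison, the proof this paper relies on (Karpenko's, as indicated in the remark following the theorem and in Section~\ref{sec: spec}) deliberately avoids $K$-theory. One works with the generic Severi--Brauer variety $X^{gen}$, whose entire Chow ring is torsion-free by equivariant intersection theory; the specialization homomorphism $\CH^i(X^{gen})\to\CH^i(X)$ carries $c_j(\zeta_{X^{gen}}(1))$ to $c_j(\zeta_X(1))$, hence maps $\CT^i(X^{gen})\cong\mathbb{Z}$ onto $\CT^i(X)$, and since the image of a generator has nonzero degree, the cyclic group $\CT^i(X)$ must itself be infinite cyclic. If you want to salvage your write-up, replace the $K$-theoretic step with this specialization argument (or restrict your claim to the special classes of algebras where the topological graded ring is actually known to be torsion-free, which is far less than the stated theorem).
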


\begin{rmk}
The proof of Theorem \ref{thm: karct} uses a specialization argument from the generic case; see the beginning of Section \ref{sec: spec} for more details. We remark that the proof doesn't require any significant input from the $K$-theory of Severi--Brauer varieties; it uses only some computations of equivariant intersection theory, see also \cite[\S8.1]{MR2258262}.
\end{rmk}

The piece of the Chow group $\CH^i(X)$ that is typically too difficult to compute is the torsion subgroup $\mathrm{Tor}_1(\CH^i(X),\mathbb{Q}/\mathbb{Z})$. But, one can try to analyze this torsion subgroup via the short exact sequence \[ 0\rightarrow \CT^i(X)\rightarrow \CH^i(X)\rightarrow \Q^i(X)\rightarrow 0.\] Here $\Q^i(X)$ is the cokernel of the canonical inclusion $\CT^i(X)\subset \CH^i(X)$ and, since the groups $\CT^i(X)$ and $\CH^i(X)$ have the same $\mathbb{Q}$-rank, there is an inclusion $\mathrm{Tor}_1(\CH^i(X),\mathbb{Q}/\mathbb{Z})\subset \Q^i(X)$. This doesn't necessarily make the problem of computing the torsion subgroup of $\CH^i(X)$ any easier. However, one can write out an explicit generating set for $\Q^2(X)$, see \cite[Proposition 3.7]{MR4280495}. As a particular case, we have:

\begin{prop}\label{prop: gen}
Suppose that $A$ is a division algebra with $\mathrm{ind}(A)=4$ and $\mathrm{exp}(A)=2$. Then $\Q^2(X)$ is generated by $c_1(\zeta_X(2))^2$.
\end{prop}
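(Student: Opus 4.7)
The statement is presented as a specialization of the general generating-set description \cite[Proposition 3.7]{MR4280495} for $\mathrm{Q}^2$ of a Severi--Brauer variety, so the plan is to invoke that result in our concrete setting and reduce the list of generators it provides to a single class.

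I would begin by unpacking the twisted tautological bundles relevant to our $A$. Because $\mathrm{exp}(A) = 2$, the algebra $A^{\otimes 2}$ is Brauer-trivial, so the only non-trivial twists appearing are $\zeta_X(1)$, which has $\mathcal{O}_X$-rank $\mathrm{ind}(A) = 4$, and $\zeta_X(2)$. A short computation after base change to an algebraic closure gives $\zeta_X(2)_{k^{\mathrm{alg}}} \cong \mathcal{O}_{\mathbb{P}^3}(-2)$, so $\zeta_X(2)$ is a line bundle on $X$ and in particular $c_2(\zeta_X(2)) = 0$.

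Next I would apply \cite[Proposition 3.7]{MR4280495} with $i = 2$ to obtain an explicit finite generating set for $\mathrm{Q}^2(X)$ consisting of (images modulo $\CT^2(X)$ of) products of Chern classes of the bundles $\zeta_X(j)$. In our setting the only candidate generators are
\[
c_2(\zeta_X(1)), \quad c_1(\zeta_X(1))^2, \quad c_1(\zeta_X(1))\, c_1(\zeta_X(2)), \quad c_2(\zeta_X(2)), \quad c_1(\zeta_X(2))^2.
\]
The first two lie in $\CT^2(X)$ by definition and the fourth vanishes by the previous paragraph, so it only remains to show that the mixed class $c_1(\zeta_X(1))\, c_1(\zeta_X(2))$ is a $\mathbb{Z}$-multiple of $c_1(\zeta_X(2))^2$ modulo $\CT^2(X)$.

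To close the gap I would prove the Picard identity $c_1(\zeta_X(1)) = 2\, c_1(\zeta_X(2))$ in $\mathrm{Pic}(X)$, from which $c_1(\zeta_X(1))\, c_1(\zeta_X(2)) = 2\, c_1(\zeta_X(2))^2$ is immediate. This identity follows from the isomorphism of sheaves
\[
\zeta_X(1)^{\otimes 2} \cong \zeta_X(2)^{\oplus 16}
\]
obtained by tensoring $\zeta_X^{\otimes 2}$ over $A^{\otimes 2}$ with the isomorphism of left $A^{\otimes 2}$-modules $M_1 \otimes_k M_1 \cong M_2^{\oplus 16}$; both sides of the latter have $k$-dimension $256$, and $A^{\otimes 2}$ is split, so every finitely generated module decomposes into copies of the unique simple module $M_2$ (of $k$-dimension $16$). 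Comparing first Chern classes gives $8\, c_1(\zeta_X(1)) = 16\, c_1(\zeta_X(2))$ in $\mathrm{Pic}(X)$, and dividing by $8$ (using that $\mathrm{Pic}(X)$ is torsion-free) yields the desired identity.

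The main obstacle will be pinning down the precise shape of the generating set produced by \cite[Proposition 3.7]{MR4280495} in the biquaternion case, so that one is sure the five classes displayed above exhaust all candidates; once this list is in hand, the rest of the argument reduces to the single Picard identity above, and no further geometric input about $X$ is needed.
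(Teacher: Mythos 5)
Your overall route is the same as the paper's: Proposition \ref{prop: gen} is obtained there by specializing the generating-set statement of \cite[Proposition 3.7]{MR4280495} (and, in the remark following it, re-proved by combining Quillen's computation of $K(X)$ with Grothendieck--Riemann--Roch without denominators), after which everything is reduced using $\mathrm{Pic}(X)=\mathbb{Z}\cdot c_1(\zeta_X(2))$. Your two reduction inputs are correct and match the paper's: $\zeta_X(2)$ is a line bundle (over $k^{alg}$ it is $\mathcal{O}_{\mathbb{P}^3}(-2)$, since $A^{\otimes 2}$ is split and $\zeta_X^{\otimes 2}$ has rank $16$), and the identity $c_1(\zeta_X(1))=2c_1(\zeta_X(2))$ follows exactly as you say from $\zeta_X(1)^{\otimes 2}\cong \zeta_X(2)^{\oplus 16}$ (or simply from injectivity of $\mathrm{Pic}(X)\to\mathrm{Pic}(X_{k^{alg}})$).

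The gap is in your candidate list, and the reason you give for it is not correct. Exponent $2$ makes $A^{\otimes 2}$ trivial, but $A^{\otimes 3}$ is Brauer-equivalent to $A$, so $\zeta_X(3)$ is a genuine nontrivial twist of rank $4$, not a copy of $\zeta_X(1)$, and it does enter: Quillen's theorem gives $K(X)$ additively generated by $\mathcal{O}_X$, $\zeta_X(1)$, $\zeta_X(2)$ \emph{and} $\zeta_X(3)$, and correspondingly the generating set the paper extracts (and the one behind \cite[Proposition 3.7]{MR4280495}) contains, besides your five classes, the classes $c_1(\zeta_X(1))c_1(\zeta_X(3))$, $c_1(\zeta_X(2))c_1(\zeta_X(3))$, $c_1(\zeta_X(3))^2$ and, crucially, $c_2(\zeta_X(3))$. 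The products of first Chern classes are killed by your Picard argument (e.g.\ $c_1(\zeta_X(3))=6c_1(\zeta_X(2))$), but $c_2(\zeta_X(3))$ is not: to dispose of it you need the canonical isomorphism $\zeta_X(3)\cong \zeta_X(1)\otimes \zeta_X(2)$, which via the formula for the second Chern class of a rank-four bundle twisted by a line bundle writes $c_2(\zeta_X(3))$ as $c_2(\zeta_X(1))$ plus products of first Chern classes, hence as an element of $\CT^2(X)+\mathbb{Z}\cdot c_1(\zeta_X(2))^2$. This is precisely the extra step in the paper's remark, and it is absent from your argument; without it (or without actually verifying that the cited proposition outputs only your five classes) the proof is incomplete, although the missing step is short and routine.
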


\begin{rmk}
Proposition \ref{prop: gen} can also be proved immediately by using the Grothendieck-Riemann-Roch (GRR) without denominators \cite[Example 15.3.6]{MR1644323} and Quillen's computation \cite[Theorem 4.1]{MR0338129} of the group $K(X)$ for the Severi--Brauer variety $X$. In this case, the group $K(X)$ is additively generated by the classes of $\mathcal{O}_X$, $\zeta_X(1)$, $\zeta_X(2)$, and $\zeta_X(3)$ so the GRR theorem implies $\CH^2(X)$ is additively generated by polynomials in the Chern classes of these bundles. Isolating monomials in these Chern classes shows that $\CH^2(X)$ is generated by 
\[c_1(\zeta_X(1))c_1(\zeta_X(2)), \quad c_1(\zeta_X(1))c_1(\zeta_X(3)), \quad c_1(\zeta_X(2))c_1(\zeta_X(3)),\]
\[ c_1(\zeta_X(1))^2,\quad c_1(\zeta_X(2))^2,\quad c_1(\zeta_X(3))^2, \quad c_2(\zeta_X(1)),\quad \mbox{and}\quad c_2(\zeta_X(3)).\]

Now since $A$ has exponent $\mathrm{exp}(A)=2$, the Picard group $\mathrm{Pic}(X)\cong \mathbb{Z}$ is generated by $c_1(\zeta_X(2))$; this allows one to eliminate the entire first row along with the first and third term from the second row in the generators above. Finally, there's a canonical isomorphism of bundles $\zeta_X(3)\cong \zeta_X(1)\otimes \zeta_X(2)$ so that \[c_2(\zeta_X(3))=c_2(\zeta_X(1))+3c_1(\zeta_X(2))c_1(\zeta_X(3))+6c_1(\zeta_X(2))^2\] which shows that $\CH^2(X)$ is generated by $c_1(\zeta_X(2))^2$ and $c_2(\zeta_X(1))$.
\end{rmk}

In this same case, we can say a bit more about the class $c_1(\zeta_X(2))^2$.

\begin{lem}\label{lem: ell}
Suppose that $A$ is a division algebra with $\mathrm{ind}(A)=4$ and $\mathrm{exp}(A)=2$. Then there is a smooth and irreducible curve $E\subset X$ having the following properties:
\begin{enumerate}
\item $E$ is geometrically the intersection of two quadric surfaces;
\item we have $[E]=c_1(\zeta_X(2))^2$ inside $\CH_1(X)$.
\end{enumerate}
\end{lem}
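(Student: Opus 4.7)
The plan is to realize $E$ as the scheme-theoretic base locus of a sufficiently general pencil in the complete linear system $|L|$, where $L := \zeta_X(2)^\vee$. A first step is to observe that $\zeta_X(2)$ is in fact a line bundle on $X$: since $\mathrm{exp}(A)=2$, the algebra $A^{\otimes 2}$ is split, so any simple $A^{\otimes 2}$-module $M_2$ has $k$-dimension $(\mathrm{deg}(A))^2=16$, and a rank count forces $\zeta_X(2)=\zeta_X^{\otimes 2}\otimes_{A^{\otimes 2}}M_2$ to have $\mathcal{O}_X$-rank $1$. Base-changing to an algebraic closure $\bar k$ and unpacking the definitions identifies $L|_{\bar k}$ with $\mathcal{O}_{\mathbb{P}^3}(2)$; in particular $L$ is very ample (very-ampleness being a geometric condition), and $h^0(X,L)=10$ by flat base change.

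Next I would take $E:=V(s_1)\cap V(s_2)$ for two sufficiently general sections $s_1,s_2\in H^0(X,L)$. Under the closed embedding $|L|\colon X\hookrightarrow \mathbb{P}(H^0(X,L)^\vee)\cong \mathbb{P}^9_k$ -- which after base change to $\bar k$ is the $2$-Veronese embedding of $\mathbb{P}^3$ -- this is the intersection of $X$ with a general codimension-$2$ linear subspace. Two successive applications of Bertini's theorem (first on the smooth threefold $X$ to get a smooth surface $V(s_1)$, then on $V(s_1)$ to get a smooth curve $V(s_1)\cap V(s_2)$, with geometric irreducibility coming from Lefschetz-style connectedness in dimensions $\ge 2$) show that for generic $(s_1,s_2)$ the scheme $E$ is a smooth, geometrically irreducible curve. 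Over an infinite field this is classical; over a finite field one may invoke Poonen's Bertini theorem, or use that the locus of good pencils is a nonempty Zariski-open subscheme of the rational $k$-variety $\mathbf{Gr}(2,H^0(X,L))$, which has $k$-rational points (possibly after passing to an appropriate infinite extension and descending).

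Both claimed properties are then immediate. Property (1) holds because $E_{\bar k}$ is the zero locus of two sections of $L|_{\bar k}=\mathcal{O}_{\mathbb{P}^3}(2)$, hence a complete intersection of two quadric surfaces in $\mathbb{P}^3$. For property (2), since $V(s_1)$ and $V(s_2)$ meet properly in codimension two on the smooth variety $X$, the standard intersection formula for a complete intersection of sections of a line bundle yields
\[ [E] \;=\; c_1(L)^2 \;=\; \bigl(-c_1(\zeta_X(2))\bigr)^2 \;=\; c_1(\zeta_X(2))^2 \]
in $\CH_1(X)$. The main obstacle in this approach is the Bertini step: producing a pencil whose base locus is both smooth and geometrically irreducible over an arbitrary base field $k$ requires some care (especially when $k$ is finite), but it is a mild technicality given the size ($h^0=10$) of the available linear system.
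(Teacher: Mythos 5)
Your proof is correct, but it takes a genuinely different route from the paper. The paper first invokes Albert's decomposition $A\cong Q_1\otimes Q_2$, embeds the product of conics $Y_1\times Y_2=\SB(Q_1)\times\SB(Q_2)$ into $X$ (geometrically a Segre quadric surface in $\mathbb{P}^3$), takes a general member $E$ of the linear system of $(\zeta_{Y_1}(2)\boxtimes\zeta_{Y_2}(2))^\vee$ on that surface via Bertini, and then computes $[E]=c_1(\zeta_X(2))^2$ by pushing forward with the projection formula, using $s_*[Y_1\times Y_2]=-c_1(\zeta_X(2))$. You instead work entirely on $X$: since $\exp(A)=2$ the sheaf $\zeta_X(2)$ is an honest line bundle (geometrically $\mathcal{O}_{\mathbb{P}^3}(-2)$), and you cut $E$ out as the base locus of a general pencil in $|\zeta_X(2)^\vee|$, getting the class $c_1(\zeta_X(2))^2$ from the standard complete-intersection formula (the same fact, via the Stacks Project Tag 0B1I, that the paper's Remark \ref{rmk: rmkref} uses in the referee's alternative argument for $[C\cup D]=[E]$). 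Your route avoids Albert's theorem and the Segre-embedding bookkeeping altogether, at the cost of a double Bertini application (or an openness-of-good-pencils argument) rather than a single hyperplane-section Bertini on the quadric surface; the paper's construction, in exchange, exhibits $E$ concretely as a bidegree $(2,2)$ curve on the descended quadric $Y_1\times Y_2$, which makes the genus and degree transparent. One simplification you missed, which the paper uses explicitly: the finite-field caveat in your Bertini step is vacuous, since $k$ carries a nontrivial division algebra and is therefore infinite by Wedderburn's little theorem; in particular your aside about Poonen's theorem or ``passing to an infinite extension and descending'' (the descent of a chosen pencil would not be automatic anyway) can simply be deleted.
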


\begin{proof}
By Albert's theorem, the algebra $A$ decomposes $A\cong Q_1\otimes Q_2$ as a tensor product of two quaternion algebras $Q_1,Q_2$ (for an algebraic proof, see \cite[Theorem 16.1]{MR1632779}; for a proof by geometric methods, see \cite[Theorem 5.5]{MR657430}). Set $Y_1=\SB(Q_1)$ and $Y_2=\SB(Q_2)$. There is an embedding \[s:Y_1\times Y_2\rightarrow X\] which is geometrically, i.e.\ over an algebraic closure $k^{alg}$ of the base field, isomorphic to the Segre embedding of a quadric surface in $\mathbb{P}^3$.

Let $E\subset Y_1\times Y_2$ be a general hyperplane section from the complete linear system associated to the line bundle $(\zeta_{Y_1}(2)\boxtimes \zeta_{Y_2}(2))^\vee$. We can find such an $E$ that is smooth and geometrically irreducible by Bertini's theorem \cite[Th\'eor\`eme 6.10 et Corollaire 6.11]{MR725671} using that the base field is infinite (since all division algebras over finite fields are trivial).

We have that $s_*([Y_1\times Y_2])=-c_1(\zeta_X(2))$ using that $\mathrm{deg}(Y_1\times Y_2)=2$ in $X$ and $\mathrm{Pic}(X)=\mathbb{Z}$ with a generator of degree $2$. 
One can also check (over $k^{alg}$) that $s^*\zeta_X(1)\cong \zeta_{Y_1}(1)\boxtimes \zeta_{Y_2}(1)$. Hence, it follows \begin{align*}c_1(\zeta_X(2))^2 & =-s_*([Y_1\times Y_2])\cdot c_1(\zeta_X(2))\\ &= -s_*([Y_1\times Y_2]\cdot c_1(\zeta_{Y_1}(2)\boxtimes \zeta_{Y_2}(2)))\\ &= -s_*(c_1(\zeta_{Y_1}(2)\boxtimes \zeta_{Y_2}(2))) \\ &=-s_*(-[E])=s_*([E])=[E].
\end{align*} 
This proves part (2) of the lemma and (1) follows from the construction. Note that $E$ has genus $g(E)=1$ and degree $\mathrm{deg}(E)=4$ inside $X$. The former of these can be checked over $k^{alg}$ as $E_{k^{alg}}$ is a curve of bidegree $(2,2)$ on a quadric surface.
\end{proof}

\section{The Proof}
In this section, we prove the main theorem. We leave the verification of some specific details until later sections.

\begin{thm}[{\cite[Proposition 5.1]{MR1615533}}]\label{thm: kar}
Let $X=\SB(Q_1\otimes Q_2)$ be the Severi--Brauer variety associated to a biquaternion division algebra. Then $\CH_1(X)=\mathbb{Z}$ is torsion-free.
\end{thm}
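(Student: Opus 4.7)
The plan is to reduce the theorem to showing that the class $[E] = c_1(\zeta_X(2))^2$ of the elliptic quartic from Lemma \ref{lem: ell} lies in $\CT^2(X)$; the geometric content is then to construct a suitable degeneration realizing this. Since $X$ has dimension $3$, we have $\CH_1(X) = \CH^2(X)$. Theorem \ref{thm: karct} applied with $p=2$ and $n=2$ gives $\CT^2(X) = \mathbb{Z}$, with generators $c_2(\zeta_X(1))$ of degree $\binom{4}{2} = 6$ and $c_1(\zeta_X(1))^2$ of degree $4^2 = 16$; in particular the generator $\xi$ of $\CT^2(X)$ has degree $\gcd(6, 16) = 2$. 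By Proposition \ref{prop: gen}, $\Q^2(X)$ is generated by $[E]$, which has degree $4$. From the exact sequence
\[ 0 \to \CT^2(X) \to \CH^2(X) \to \Q^2(X) \to 0, \]
it follows that if $[E] \in \CT^2(X)$ then $\Q^2(X) = 0$ and $\CH_1(X) = \CT^2(X) = \mathbb{Z}$, which is torsion-free. Note that over $k^{\mathrm{alg}}$ the relation $[E] = 2\xi$ holds automatically for degree reasons.

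To produce the relation $[E] \in \CT^2(X)$ over $k$, I would follow the three-step strategy outlined in the introduction: construct explicit cycle representatives for the generators of $\CH^2(X)$, and then produce the required relation via a geometric degeneration. Specifically, I would look for a $k$-rational 1-cycle $C \subset X$ -- a \textquotedblleft 4-gon of lines,\textquotedblright\ i.e., a cycle that is geometrically a union of two pairs of skew lines on the Segre quadric $s(Y_1 \times Y_2) \subset X$ -- whose class $[C]$ lies in $\CT^2(X)$. The natural candidate is $C = s_*(\pi_1^{-1}(D_1) + \pi_2^{-1}(D_2))$, where each $D_i$ is a $k$-rational degree-$2$ divisor on the conic $Y_i = \SB(Q_i)$ (such divisors exist since $\mathrm{Pic}(Y_i)$ is generated by a degree-$2$ class). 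A flat family inside the linear system $|\zeta_{Y_1}(2)^\vee \boxtimes \zeta_{Y_2}(2)^\vee|$ on $Y_1 \times Y_2$, interpolating between a generic smooth section (yielding $E$ by Bertini, as in Lemma \ref{lem: ell}) and the reducible section $u \cdot v$ cutting out $D_1$ and $D_2$ (yielding $C$), then realizes $[E] = [C]$ in $\CH_1(X)$ via rational equivalence.

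The key remaining step, and likely the main obstacle, is to verify that $[C] \in \CT^2(X)$. A direct projection-formula computation only shows that $[C] = [E]$, recovering the relation already implicit in Lemma \ref{lem: ell} but not placing the class in $\CT^2(X)$, since the individual pieces $s_*\pi_i^{-1}(D_i)$ are not visibly pullbacks from $X$. I expect the resolution to require either a secondary degeneration of $C$ off of the subvariety $Y_1 \times Y_2$ onto a complete-intersection cycle given by Chern classes of $\zeta_X(1)$, or else a direct identification of $s_*\pi_i^{-1}(D_i)$ with the generator $\xi \in \CT^2(X)$ of degree $2$ (so that $[C] = 2\xi$). This explicit geometric verification is what the paper defers to its later sections, and is where the bulk of the geometric work lies.
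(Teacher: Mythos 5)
Your reduction is correct and matches the paper's: by Proposition \ref{prop: gen} it suffices to show $c_1(\zeta_X(2))^2=[E]$ lies in $\CT^2(X)$, and your degeneration of $E$ inside the linear system $|(\zeta_{Y_1}(2)\boxtimes\zeta_{Y_2}(2))^\vee|$ to the $4$-gon $C=s(\pi_1^{-1}(D_1)\cup\pi_2^{-1}(D_2))$ is a perfectly good substitute for the paper's degeneration (which connects $E$ to a $4$-gon via a pencil of quadric sections of $X$, i.e.\ a line in $\mathbf{Gr}(2,\Gamma(X,\zeta_X(2)^\vee))$ mapped to the twisted Hilbert scheme; compare also Remark \ref{rmk: rmkref}). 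But the step you defer --- ``$[C]\in\CT^2(X)$'' --- is not a detail to be checked later: it is the main content of the theorem and the bulk of the paper (Sections \ref{sec: spec} and \ref{sec: twhilb}). As you correctly observe, the projection formula only gives $[C]=[E]$, which is already Lemma \ref{lem: ell}; and neither of your two suggested fixes is carried out. In particular, a ``direct identification'' of $s_*\pi_i^{-1}(D_i)$ with the degree-$2$ generator of $\CT^2(X)$ cannot be done by comparing degrees: the whole difficulty is possible torsion in $\CH_1(X)$, and torsion classes have degree $0$, so a pair of skew lines could a priori differ from $3c_2(\zeta_X(1))-c_1(\zeta_X(1))^2$ by an unknown torsion class --- which is exactly what must be ruled out.

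The paper supplies this missing step in two stages. First (Lemma \ref{lem: spec1}), it specializes from the generic Severi--Brauer variety $X^{gen}$, whose Chow groups are torsion-free so that degrees do determine classes there, along a chain of DVRs to produce one explicit subscheme $V\subset X$ that is geometrically a pair of skew lines and satisfies $[V]=3c_2(\zeta_X(1))-c_1(\zeta_X(1))^2$. Second (Proposition \ref{prop: rtriv} and Corollary \ref{cor: curvescd}), it proves that the parameter space $\acute{e}t_2(A)$ of all such skew-line pairs is $R$-trivial, so every $k$-point of it --- in particular each half of the $4$-gon --- gives the same class in $\CH_1(X)$; hence $[C\cup D]=2(3c_2(\zeta_X(1))-c_1(\zeta_X(1))^2)\in\CT^2(X)$. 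Your curves $s_*\pi_i^{-1}(D_i)$ do define $k$-points of $\acute{e}t_2(A)$, so your configuration would go through once combined with these two ingredients; without them, the proposal establishes only $[E]=[C]$ and does not show $\Q^2(X)=0$, so the proof is incomplete at its central point.
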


\begin{proof}
To follow the notation of Section \ref{sec: pre}, we write $A=Q_1\otimes Q_2$. We're going to verify explicitly the equality of cycle classes \[c_1(\zeta_X(2))^2=2(3c_2(\zeta_X(1))-c_1(\zeta_X(1))^2)\] which will show that $\Q^2(X)=0$ by Proposition \ref{prop: gen}.

Let $F/k$ be a biquadratic Galois splitting field for $A$ with Galois group $G=\mathrm{Gal}(F/k)$. Choose generators $\sigma,\tau$ so that $G=\{1,\sigma,\tau,\sigma\tau\}$. Find a closed point $x\in X$ with residue field $k(x)=F$ and identify the $F$-points of $x_F$ with elements of $G$. Let $L_{\sigma,\sigma\tau}\subset X_F$ denote the line passing through $\sigma$ and $\sigma\tau$. We define the lines $L_{1,\sigma}$, $L_{\tau,\sigma\tau}$, and $L_{1,\tau}$ similarly. Now each of the unions \[ C'=L_{1,\sigma}\cup L_{\tau,\sigma\tau} \quad \mbox{and}\quad D'=L_{1,\tau}\cup L_{\sigma,\sigma\tau}\] form a $G$-orbit and hence descend to curves $C,D\subset X$ with $C\times_k F=C'$ and $D\times_k F=D'$.

By Corollary \ref{cor: curvescd} below, both curves $C$ and $D$ represent the cycle class $3c_2(\zeta_X(1))-c_1(\zeta_X(1))^2$ so that \[[C\cup D] = [C]+[D]=2(3c_2(\zeta_X(1))-c_1(\zeta_X(1))^2).\] We're going to show that $C\cup D$ is a rational degeneration of any curve $E$ from Lemma \ref{lem: ell} so that they represent the same cycle class. More precisely, we're going to show there exists a proper surface $S$ with a flat morphism to $\mathbb{P}^1$, say $\pi:S\rightarrow \mathbb{P}^1$, and a morphism $\rho:S\rightarrow X$ so that $\rho$ is a closed immersion on any fiber of $\pi$ and the restriction of $\rho$ to the fibers over points $t_0,t_1\in \mathbb{P}^1(k)$ are $\rho(\pi^{-1}(t_0))=E$ and $\rho(\pi^{-1}(t_1))=C\cup D$. Since $\pi$ is flat in this scenario, and since $[t_0]=[t_1]\in \CH_0(\mathbb{P}^1)$, we find an equality \[[E]=\rho_*\pi^*(t_0)=\rho_*\pi^*(t_1)=[C\cup D]\] in $\CH_1(X)$ which implies that $\Q^2(X)=0$ as desired.

It remains to construct the triple $(S,\pi,\rho)$. For this, we first consider the Hilbert scheme $\Hilb_{4t}(\mathbb{P}^3_F/F)$ parametrizing $F$-subschemes of $\mathbb{P}^3_F$ having Hilbert polynomial $h(t)=4t$. There is a rational map \[\Lambda:\mathbf{Gr}(2,\Gamma(\mathbb{P}^3_F,\mathcal{O}(2))) \dashrightarrow \Hilb_{4t}(\mathbb{P}^3_F/F)\] sending a $2$-dimensional subspace $V\subset \Gamma(\mathbb{P}^3_F,\mathcal{O}(2))$ to the vanishing set of those polynomials in $V$. Writing $\mathbb{P}=\mathbb{P}(\Gamma(\mathbb{P}^3_F,\mathcal{O}(1)))$, the map $\Lambda$ is defined outside the image of the multiplication map \[\mathbb{P}\times \mathbf{Gr}(2,\Gamma(\mathbb{P}^3_F,\mathcal{O}(1)))\rightarrow \mathbf{Gr}(2,\Gamma(\mathbb{P}^3_F,\mathcal{O}(2))),\quad (f,\langle g,h\rangle) \mapsto \langle fg,fh\rangle\] and the image of $\Lambda$ is contained inside $H_{4,1,3}$, the irreducible component of $\Hilb_{4t}(\mathbb{P}^3_F/F)$ containing the open subscheme parametrizing smooth curves of degree 4 and genus 1. Since $H_{4,1,3}$ is geometrically irreducible and of dimension $16$ by \cite[Theorem 8]{MR875083}, we find that $\Lambda$ is birational between its domain and this component $H_{4,1,3}$.

Let $\xi\in Z^1(G,\mathrm{PGL}_4(F))$ be a Galois $1$-cocycle representing the class of $X$ inside of $\HH^1(G,\mathrm{PGL}_4(F))$. Using $\xi$ one can descend $\Lambda$ to a map \[\Lambda_{\xi}: \SB_2(S^2(A))\dashrightarrow \Hilb_{4t}^{tw}(X/k)\] from a generalized Severi--Brauer variety of the second symmetric power $S^2(A)$ of $A$ to the $\xi$-twisted Hilbert scheme of $X$ (for more on twisted Hilbert schemes see Section \ref{sec: twhilb}). The map $\Lambda_\xi$ is also a birational map between its domain and the irreducible component $\mathrm{Ell}_4(X)$, containing the image of $\Lambda_\xi$, which is an $F/k$-form of $H_{4,1,3}$.

The algebra $S^2(A)$ is split by \cite[Example 4.5]{MR657430} and in this case $\SB_2(S^2(A))=\mathbf{Gr}(2,\Gamma(X,\zeta_X(2)^\vee))$. Lemmas \ref{lem: degen} and \ref{lem: ellh} below show that the $k$-points corresponding to $C\cup D$ and $E$ lie in the image of $\Lambda_\xi$. Hence there is a rational map \[\mathbb{P}^1\dashrightarrow \mathbf{Gr}(2,\Gamma(X,\zeta_X(2)^\vee))\dashrightarrow \Hilb_{4t}^{tw}(X/k)\] extending to a genuine morphism $\phi:\mathbb{P}^1\rightarrow\Hilb_{4t}^{tw}(X/k)$ that passes through these two points. 

The universal family $\mathbf{Univ}_{4t}(\mathbb{P}^3_F/F)$ on the scheme $\Hilb_{4t}(\mathbb{P}^3_F/F)$ descends to a universal family $\mathbf{Univ}_{4t}^{tw}(X/k)$ on the $\xi$-twisted Hilbert scheme $\Hilb_{4t}^{tw}(X/k)$. Letting $S$ be the surface $\mathbf{Univ}_{4t}^{tw}(X/k)\times_{\phi} \mathbb{P}^1$, considered as a closed subscheme of $X\times_k \mathbb{P}^1$, and denoting by $\pi,\rho$ the corresponding projections, it follows that $(S,\pi,\rho)$ satisfy all the desired properties, completing the proof.
\end{proof}

\begin{rmk}
The existence of the map $\Lambda_\xi$ does not depend on the assumption that $A$ has exponent $\exp(A)=2$; it exists more generally for any division algebra $A$ with $\mathrm{ind}(A)=4$. In the case $\mathrm{exp}(A)=4$, the algebra $S^2(A)$ has index $\mathrm{ind}(S^2(A))=2$ by \cite[Example 4.5]{MR657430} and so $\SB_2(S^2(A))$ is also rational by \cite[Proposition 3]{MR1092553}. Hence the open subscheme of $\mathrm{Ell}_4(X)$ parametrizing smooth and irreducible curves of genus 1 has a $k$-rational point. This gives an alternative proof of a result from \cite{MR3091612}; namely, that every Severi--Brauer variety associated to a central simple algebra of index $4$ contains a smooth curve of genus 1. 
\end{rmk}

\begin{rmk}\label{rmk: rmkref}
	There are two main parts to the proof of Theorem \ref{thm: kar}. The first part, completed in Corollary \ref{cor: curvescd}, consists of proving that both of the curves $C$ and $D$ represent the cycle class $3c_2(\zeta_X(1))-c_1(\zeta_X(1))^2$. The remainder of this paper is devoted to proving this representation: in Section \ref{sec: spec} we show, by specialization from the generic case, that this linear combination of Chern classes is represented by a subscheme of $X$ which is geometrically a union of two skew lines; we then show, in Section \ref{sec: twhilb}, that every subscheme of $X$ which is geometrically a union of two skew lines represents the same cycle class in $\CH_1(X)$ through a study of the connectedness properties of the scheme which parametrizes all such subschemes of $X$.
	
	The second main part of the proof of Theorem \ref{thm: kar} is the proof that both of the classes $[C\cup D]$ and $[E]$, for a geometrically elliptic normal curve $E\subset X$, are equal inside $\CH_1(X)$. In the above proof, we show that $[C\cup D]=[E]$ by constructing a rational degeneration from one subscheme to the other. We can give another proof of this equality without the use of twisted Hilbert schemes (although, twisted Hilbert schemes are still used in Section \ref{sec: twhilb} below for the first part of the proof); in the following, we show $[E]=[C\cup D]$ using Hilbert's Theorem 90. I'd like to thank an anonymous referee for explaining to me this proof.
	
	Keep notation as in the proof of Theorem \ref{thm: kar}. Let $W\subset \Gamma(X,\zeta_X(2)^\vee)$ be the $k$-vector subspace of sections whose vanishing locus contains both $C$ and $D$. Over the biquadratic splitting field $F/k$, the $F$-vector space $W_F\subset \Gamma(\mathbb{P}^3_F,\mathcal{O}(2))$ consists of all sections with vanishing locus containing both $C'$ and $D'$. One can check directly that $\mathrm{dim}_F(W_F)\geq 2$ (it's possible to construct two different unions of two planes which both vanish at $C'$ and $D'$ using the $F$-points of $x_F$ where $x\in X$ has $k(x)\cong F$; see the proof of Lemma \ref{lem: degen}). So, by Hilbert's Theorem  90 (essentially a dimension count in this case), we find that $\mathrm{dim}_k(W)\geq 2$.
	
	If $\theta$ and $\eta$ are two linearly independent elements of $W$, then the zero sections of $\theta$ and $\eta$ both represent $c_1(\zeta_X(2)^\vee)$. Also, the intersection of the zero sections of $\theta$ and $\eta$ is exactly $C\cup D$. So there is an equality \[[C\cup D]=c_1(\zeta_X(2)^\vee)^2=c_1(\zeta_X(2))^2=[E]\] inside $\CH_1(X)$ by \cite[\href{https://stacks.math.columbia.edu/tag/0B1I}{Tag 0B1I}]{stacks-project} and Lemma \ref{lem: ell}.
\end{rmk}

\begin{rmk}\label{rmk: simple}
A short proof of Theorem \ref{thm: kar} can be found in Karpenko's computation of the Chow ring of a Severi--Brauer variety associated to a generic central simple algebra of index 4 and exponent 2 \cite{MR3581317}. Namely, there's an equality of classes inside the $\lambda$-ring $K(X)$ \[\lambda^2([\zeta_X(1)])=6[\zeta_X(2)]\quad \mbox{and} \quad  [\zeta_X(1)]^2=16[\zeta_X(2)]\] so that $3\lambda^2([\zeta_X(1)])-[\zeta_X(1)]^2=2[\zeta_X(2)]$ is contained in the $\lambda$-subring of $K(X)$ generated by $\zeta_X(1)$. It follows that Chern classes of $\zeta_X(2)^{\oplus 2}$ are contained in $\CT(X)$. So $c_2(\zeta_X(2)^{\oplus 2})=c_1(\zeta_X(2))^2$ is also contained in $\CT(X)$ and therefore $Q^2(X)=0$.
\end{rmk}

\section{Specialization Arguments}\label{sec: spec}
In this section, we prove some results by specialization from the generic case. We work over a fixed base field $k$ and now we let $A$ be a given central division $k$-algebra of degree $\mathrm{deg}(A)=n$ with $X=\SB(A)$. We choose an embedding $G=\mathrm{PGL}_n\subset \mathrm{GL}_N$ of algebraic groups and consider the quotient $S=\mathrm{GL}_N/G$. The quotient map $\mathrm{GL}_N\rightarrow S$ is a $G$-torsor and, if $P\subset G$ is a parabolic subgroup with $G/P\cong \mathbb{P}^{n-1}$, the quotient $\mathrm{GL}_N/P\rightarrow S$ is a Severi--Brauer $S$-scheme. 

From now on we write $\mathcal{X}=\mathrm{GL}_N/P$ for this Severi--Brauer $S$-scheme and $X^{gen}$ for its generic fiber over $S$. The Severi--Brauer variety $X^{gen}$ is associated to a division $k(S)$-algebra $U^{gen}$ having index $\mathrm{ind}(U^{gen})=n$ and exponent $\mathrm{exp}(U^{gen})=n$. Moreover, $X^{gen}$ is versal in the sense that for any Severi--Brauer variety $Y$ over a field extension $F/k$ with $F$ infinite, there is an $F$-point $s\in S(F)$ so that the fiber $\mathcal{X}_s$ is isomorphic with $Y$. Since $S$ is smooth we can find, at any point $s\in S$, a sequence of DVRs say $(R_0,\mathfrak{m}_0),...,(R_{j(s)},\mathfrak{m}_{j(s)})$ satisfying the following conditions:
\begin{enumerate}
	\item $\mathrm{Frac}(R_0)=k(S)$,  
	\item $R_i/\mathfrak{m}_i\cong \mathrm{Frac}(R_{i+1})$
	\item $R_{j(s)}/\mathfrak{m}_{j(s)}\cong k(s)$.
\end{enumerate}
Hence, if $Y$ is defined over the base field $k$, this means that there exist specialization homomorphisms \cite[\S20.3]{MR1644323} \[\CH^i(X^{gen})\rightarrow \CH^i(Y)\] which take $c_i(\zeta_{X^{gen}})$ to $c_i(\zeta_{Y})$ or, in the case that $Y$ is associated to a division algebra of index $n$, that take $c_i(\zeta_{X^{gen}}(1))$ to $c_i(\zeta_Y(1))$.

The Chow groups $\CH_i(X^{gen})$ are torsion-free for all $i\geq 0$ by \cite[Proposition 3.2]{MR3590349}. For those integers $i$ where these groups are nonzero (i.e.\ for $i=0,...,n-1$) this means that $\CH_i(X^{gen})=\mathbb{Z}$. Passing to a splitting field for $X^{gen}$, one can determine that generators of $\CH_i(X^{gen})$ have degrees $\pm n/\gcd(n,n-i-1)$.

Using this description of the Chow groups of $X^{gen}$ together with the specialization maps above, we provide in Proposition \ref{prop: reps} a description of some cycle representatives for the Chern classes $c_i(\zeta_X(1))$; this is not used in the proof of Theorem \ref{thm: kar} but it is of independent interest. Then we turn to the case that $A$ is a biquaternion algebra and prove Lemma \ref{lem: spec1} below on representatives for a generator of $\CT^2(X)$; this is used explicitly in the proof of Theorem \ref{thm: kar}.

\begin{prop}\label{prop: reps}
For any integer $i\geq 1$, the class $(-1)^ic_i(\zeta_X(1))$ is represented by the cycle class of a scheme which is geometrically a union of $i$-dimensional linear spaces.
\end{prop}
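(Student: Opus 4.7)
The plan is to apply the specialization framework of this section. Since the specialization homomorphism $\CH^i(X^{gen})\to\CH^i(X)$ sends $c_i(\zeta_{X^{gen}}(1))$ to $c_i(\zeta_X(1))$ and carries classes of flat families to their special fibers, it suffices to exhibit a closed subscheme $Y^{gen}\subset X^{gen}$ that is geometrically a union of linear subspaces of the appropriate dimension and whose class equals $(-1)^i c_i(\zeta_{X^{gen}}(1))$ in $\CH^i(X^{gen})$. The torsion-freeness of $\CH^i(X^{gen})$ together with the injectivity of pullback to a splitting field $L/k(S)$ reduces this class equality to a computation on the split model $X^{gen}_L\cong \mathbb{P}^{n-1}_L$, where $\zeta_{X^{gen}}(1)_L\cong \mathcal{O}(-1)^{\oplus n}$ and hence $(-1)^i c_i(\zeta_{X^{gen}}(1))_L=\binom{n}{i}h^i$, with $h$ denoting the hyperplane class.

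To construct $Y^{gen}$ I would begin with a closed point $z\in X^{gen}$ whose residue field is a degree-$n$ separable splitting field for $A^{gen}$; such a point exists because $\mathrm{ind}(A^{gen})=n$. Base-changing $z$ to the Galois closure of its residue field produces $n$ points in $\mathbb{P}^{n-1}$ on which the absolute Galois group acts by permutation, and a suitably generic choice of $z$ places these points in general linear position. The union of all $\binom{n}{i}$ linear subspaces spanned by the appropriate-size subsets of these $n$ points is then Galois-stable and hence descends to a closed subscheme $Y^{gen}\subset X^{gen}$ whose geometric fiber is, by construction, a union of linear subspaces. Its class over $L$ is $\binom{n}{i}h^i$, matching $(-1)^i c_i(\zeta_{X^{gen}}(1))_L$, so by the torsion-freeness argument above we obtain $[Y^{gen}]=(-1)^i c_i(\zeta_{X^{gen}}(1))$ in $\CH^i(X^{gen})$.

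For the descent to $X$ I would choose $z$ as the generic fiber of a closed subscheme $Z\subset \mathcal{X}$ which is finite flat of degree $n$ over a Zariski neighborhood of the point $s\in S$ corresponding to $X$, and extend the linear-span configuration to a flat family $\mathcal{Y}\subset \mathcal{X}$ over this neighborhood. The fiber of $\mathcal{Y}$ at $s$ gives a closed subscheme $Y\subset X$ whose class is the specialization of $[Y^{gen}]$, and hence equals $(-1)^i c_i(\zeta_X(1))$ by functoriality. The main technical obstacle is guaranteeing that the specialized configuration is still geometrically a union of linear subspaces in general position rather than a non-reduced or otherwise degenerate scheme; this should be arranged using the smoothness of $S$ to lift $Z$ carefully and the openness of the general-position condition in families on the relevant component of the Hilbert scheme.
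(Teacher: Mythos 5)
Your construction over the generic fiber is essentially the paper's: pick a point of $X^{gen}$ whose residue field is a degree-$n$ splitting field, pass to a Galois closure, take the union of the linear spans of subsets of the resulting $n$ points, descend it, and pin down its class by a degree comparison over a splitting field using the torsion-freeness of $\CH(X^{gen})$ from \cite[Proposition 3.2]{MR3590349} (your count of $\binom{n}{i}$ spans for the codimension-$i$ class is the right bookkeeping). The gap is in the specialization step, which is where all of the actual work in the paper lies. The passage from $X^{gen}$ to $X=\mathcal{X}_s$ is not achieved by restricting a flat family over a Zariski neighborhood of $s$: the closure of your configuration in $\mathcal{X}$ need not be flat near $s$, and you do not get to prescribe its fiber there. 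The paper instead uses the specialization homomorphisms of \cite[\S20.3]{MR1644323} along a chain of DVRs $(R_0,\mathfrak{m}_0),\dots,(R_{j(s)},\mathfrak{m}_{j(s)})$ connecting $k(S)$ to $k(s)$, which is precisely why that chain is set up at the start of Section \ref{sec: spec}.

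More seriously, your appeal to ``openness of the general-position condition'' runs in the wrong direction: openness tells you nothing at the special fiber, and being a union of linear spaces in general position is exactly the sort of condition that specialization can destroy. What must be proved is that the limit is still supported on linear spaces, and the paper's mechanism for this is the core of the proof: at each stage it base-changes the DVR $R_j$ to a DVR $T_j$ (a localization of the integral closure of $R_j$ in the splitting field), over which the Severi--Brauer scheme becomes $\mathbb{P}^{n-1}_{T_j}$ because it is generically split and projective modules over a DVR are free; over $T_j$ the closure of each individual linear component of the generic fiber meets the special fiber in a linear space, so the special fiber is a union of linear spaces, possibly with multiplicities. Note the paper neither claims nor needs general position or reducedness of the limit; indeed, when a specific configuration (two skew lines) is required in Lemma \ref{lem: spec1}, an additional nondegeneracy input \cite[Lemma 3.4]{em} is invoked to rule out degenerate limits. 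Your plan of choosing $Z$ finite flat near $s$ with an \'etale, general-position fiber at $s$, and a flat extension $\mathcal{Y}$ with linear fibers, amounts to assuming the conclusion of this step rather than proving it.
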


\begin{proof}
Choose a field extension $F/k(S)$ of degree $n$ splitting $X^{gen}$ and let $E/F$ be a Galois closure of $F$ inside a fixed separable closure $F^{sep}$. 
Choose any point $x$ in $X^{gen}$ with residue field $F$. The set of $E$-points in $x_E$ has exactly $n$ elements which form an orbit for the Galois group $\mathrm{Gal}(E/F)$. For any integer $i\geq 1$, denote by $V^{gen}_{i,E}\subset X^{gen}_E$ the union of all $i$-dimensional linear spaces of $X^{gen}_E$ passing through any $(i+1)$-tuple of the $n$ points contained in $x_E$. The union $V^{gen}_{i,E}$ is Galois stable, so it descends to a subvariety $V^{gen}_i$ of $X^{gen}$.

As $X$ is a Severi--Brauer variety defined over $k$ there is some $k$-point $s\in S(k)$ with $\mathcal{X}_s\cong X$. Let $(R_0,\mathfrak{m}_0),..., (R_{j(s)},\mathfrak{m}_{j(s)})$ be a sequence of DVRs connecting the fields $k(S)$ and $k(s)$ as above. Write $\mathcal{V}_{i,0}$ for the closure of $V_i^{gen}$ inside $\mathcal{X}\times_S R_0$.

Let $T_0'$ be the integral closure of $R_0$ inside $E$ and write $T_0=(T_0')_{\mathfrak{n}_0}$ for the localization at a fixed maximal ideal $\mathfrak{n}_0\subset T'_0$ lying above $\mathfrak{m}_0$. By construction, the fraction field of $T_0$ is $E$ and the DVR $(T_0,\mathfrak{n}_0)$ is flat over $R_0$. Now the Severi--Brauer scheme $\mathcal{X}\times_S T_0$ is split since it is generically split \cite[Lemma A.1]{em}. Moreover, $\mathcal{X}\times_S T_0\cong \mathbb{P}^{n-1}_{T_0}$ since every finitely generated projective $T_0$-module is free.

The scheme $\mathcal{V}_{i,0}\times_S T_0$ is generically a union of $\binom{n}{i+1}$ linear spaces. The closure of each of these linear spaces intersects the special fiber $\mathcal{X}\times_S (T_0/\mathfrak{n}_0)$ in a linear space as well. Hence the irreducible components of $\mathcal{V}_{i,0}\times_S (T_0/\mathfrak{n}_0)$ must all be linear (possibly with multiplicities).

The field extension $T_0/\mathfrak{n}_0$ of $R_0/\mathfrak{m}_0$ is finite since $R_0$ is a Nagata ring \cite[\href{https://stacks.math.columbia.edu/tag/0335}{Tag 0335}]{stacks-project}. Let $T_1'$ be the integral closure of $R_1$ in $T_0/\mathfrak{n}_0$ and $T_1=(T_1')_{\mathfrak{n}_1}$ the localization at a fixed maximal ideal $\mathfrak{n}_1$ lying over $\mathfrak{m}_1$. Then $T_1$ is finite over $R_1$ since $R_1$ is also a Nagata ring. Write $\mathcal{V}_{i,1}$ for the closure of $\mathcal{V}_{i,0}\times_S (R_0/\mathfrak{m}_0)$ inside $\mathcal{X}\times_S R_1$. It follows, as above, that $\mathcal{V}_{i,1}\times_S (T_1/\mathfrak{n}_1)$ is a union of linear spaces (possibly with multiplicities).

Repeating this process, we get a sequence of spaces $\mathcal{V}_{i,j}$ contained in $\mathcal{X}\times_S R_j$ with both generic and special fibers over $R_j$ the union of some linear spaces. It follows that the specialization of the cycle class defined by $V_i^{gen}$ to $X$ is represented by a scheme $\mathcal{V}_{i,j(s)}\times_S k(s)$ inside $X$ which is geometrically a union of linear spaces \cite[\S20.3]{MR1644323}. Using \cite[Proposition 3.2]{MR3590349} one can check that $V_i^{gen}$ represents the class of $(-1)^ic_i(\zeta_{X^{gen}}(1))$ by comparing degrees geometrically so, by the discussion above the proposition, this concludes the proof. \end{proof}

Using ideas from the proof of the above proposition, we can prove the following lemma that is used in the proof of Corollary \ref{cor: curvescd} below:

\begin{lem}\label{lem: spec1}
Suppose that $A=Q_1\otimes Q_2$ is a biquaternion division algebra. Then there exists a subscheme $V\subset X$ which is geometrically a union of two skew lines and such that \[[V]=3c_2(\zeta_X(1))-c_1(\zeta_X(1))^2\] inside $\CH_1(X)$ with $X=\SB(A)$.
\end{lem}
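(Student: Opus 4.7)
The plan is to apply a specialization argument analogous to the proof of Proposition \ref{prop: reps}. First, by Theorem \ref{thm: karct}, the class $3c_2(\zeta_X(1)) - c_1(\zeta_X(1))^2$ has degree $3\binom{4}{2} - 4^2 = 2$ in $\CH_1(X_{k^{alg}}) = \mathbb{Z}$, matching the geometric degree of a union of two skew lines in $\mathbb{P}^3$. The same calculation in the versal case of Section \ref{sec: spec} shows that the corresponding class generates the torsion-free group $\CH_1(X^{gen}) = \mathbb{Z}$ (whose generator has degree $4/\gcd(4,2) = 2$).

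To construct $V$, I use the biquadratic structure of $A = Q_1 \otimes Q_2$. Let $F/k$ be a biquadratic Galois splitting field with $G := \mathrm{Gal}(F/k) = \{1,\sigma,\tau,\sigma\tau\}$, and choose a closed point $x \in X$ with $k(x) = F$. The four $F$-points of $x_F \subset X_F \cong \mathbb{P}^3_F$ form a single $G$-orbit, and each of the three partitions of this orbit into two disjoint pairs (for instance $\{\{1,\sigma\},\{\tau,\sigma\tau\}\}$) is $G$-stable, since left multiplication by any element of $G$ preserves each pair as a set. Form $V_F = L_{1,\sigma} \cup L_{\tau,\sigma\tau} \subset X_F$; it is $G$-stable and descends to a closed subscheme $V \subset X$ that is geometrically a union of two skew lines. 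The four conjugate points are in general position in $\mathbb{P}^3_F$ for a sufficiently generic choice of $x$, ensuring that the two lines are distinct and non-coplanar.

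To verify the cycle class identity $[V] = 3c_2(\zeta_X(1)) - c_1(\zeta_X(1))^2$ in $\CH_1(X)$, I specialize from $X^{gen}$: choose $s \in S(k)$ with $\mathcal{X}_s \cong X$ and a DVR sequence $(R_0,\mathfrak{m}_0), \ldots, (R_{j(s)},\mathfrak{m}_{j(s)})$ connecting $k(S)$ and $k$, as in Proposition \ref{prop: reps}. The plan is to lift $V$ to a family $\mathcal{V} \subset \mathcal{X} \times_S R_0$ whose generic fiber $V^{gen} \subset X^{gen}$ is a $1$-cycle of degree $2$, forcing $[V^{gen}] = 3c_2(\zeta_{X^{gen}}(1)) - c_1(\zeta_{X^{gen}}(1))^2$ by the torsion-freeness of $\CH_1(X^{gen})$. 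Iterating the closure-and-specialization step through the DVR sequence exactly as in the proof of Proposition \ref{prop: reps}, the cycle classes and Chern classes are preserved at each stage, so we obtain the desired identity in $\CH_1(X)$ after the final specialization to $k$.

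The main obstacle is constructing the lift $V^{gen}$: a two-skew-line cycle does not naturally descend to $X^{gen}$ over $k(S)$ because the generic Galois action on the four residue conjugates is $S_4$, which acts transitively on the three partitions into disjoint pairs. I handle this by choosing a closed point of $X^{gen}$ whose residue field contains a biquadratic subextension that specializes to $F$, so that the line-pair construction becomes Galois-stable in this local setting; its closure through the DVR sequence then descends using the Nagata integral closure arguments from the proof of Proposition \ref{prop: reps}. Alternatively, one can take $V^{gen}$ to be the formal combination $3V_1^{gen} - (V_2^{gen})^2$, whose class is correct in the torsion-free $\CH_1(X^{gen})$, and then verify that its specialization agrees with $V$ as a cycle class in $X$, which reduces to showing that the three partition-cycles of $V_1$ in $X$ are rationally equivalent.
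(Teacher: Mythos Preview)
Your strategy inverts the direction of the paper's argument, and this inversion is exactly what creates the gap you identify. The lemma only asks for \emph{some} $V\subset X$ with the stated properties; it does not require $V$ to be the specific curve you build from a chosen biquadratic point of $X$. The paper therefore works entirely at the generic level first: by Albert's theorem, the generic division algebra $U^{gen}$ of index $4$ (even though it has exponent $4$) already contains a biquadratic Galois maximal subfield $F/k(S)$. Picking a closed point $x\in X^{gen}$ with $k(x)=F$ and forming $V'=L_{1,\sigma}\cup L_{\tau,\sigma\tau}$ gives a Galois-stable union descending to $V^{gen}\subset X^{gen}$. Since $\CH_1(X^{gen})$ is torsion-free with generator of degree $2$, the degree computation you carried out shows $[V^{gen}]=3c_2(\zeta_{X^{gen}}(1))-c_1(\zeta_{X^{gen}}(1))^2$. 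One then specializes $V^{gen}$ down to $X$ exactly as in Proposition \ref{prop: reps}; no lifting from $X$ back to $X^{gen}$ is ever required.

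Your ``obstacle'' paragraph therefore rests on a misconception: one is not forced to use a degree-$4$ point of $X^{gen}$ whose Galois closure has group $S_4$. Your fix (a) gestures in the right direction but never supplies the crucial input (Albert's theorem for $U^{gen}$) guaranteeing a biquadratic point on $X^{gen}$, and the requirement that it ``specialize to $F$'' is both unnecessary and unjustified. Your fix (b) is circular in the logic of the paper: showing that the three partition-cycles on $X$ are rationally equivalent is precisely Corollary \ref{cor: curvescd}, whose proof \emph{uses} this lemma.

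One further point: after specializing, one still has to check that the resulting $V\subset X$ is a union of two \emph{skew} lines rather than two coplanar lines or a double line. The paper does this by invoking the fact that closed subschemes of a Severi--Brauer variety of a division algebra are geometrically nondegenerate. Your appeal to ``a sufficiently generic choice of $x$'' cannot substitute here, since the specialization procedure gives no control over which $V$ one obtains in $X$.
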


\begin{proof}
We assume for this proof that $X^{gen}$ is the generic Severi--Brauer variety associated to a generic central division $k$-algebra $U^{gen}$ of index $\mathrm{ind}(U^{gen})=4$ and exponent $\mathrm{exp}(U^{gen})=4$. By a theorem of Albert \cite[\S XI.6 Theorem 9]{MR0000595}, there is a quartic Galois field extension $F/k$ contained in $U^{gen}$ with Galois group $\mathrm{Gal}(F/k)=\mathbb{Z}/2\mathbb{Z}\times \mathbb{Z}/2\mathbb{Z}$. Let $\sigma$ and $\tau$ be two generators for this group.

Pick a point $x$ of $X^{gen}$ with residue field $k(x)=F$. Identify the $F$-points of $x_F$ with the set $\{1,\sigma,\tau,\sigma\tau\}$ in a way that is compatible with the Galois action on $X^{gen}_F$. Then the union \[ V'=L_{1,\sigma}\cup L_{\tau,\sigma\tau},\] of the lines passing through $\{1,\sigma\}$ and $\{\tau,\sigma\tau\}$ respectively, is Galois stable. Hence there is a subscheme $V^{gen}\subset X^{gen}$ with $V^{gen}_F\cong V'$.

Let $s\in S(k)$ be such that $\mathcal{X}_s\cong X$, and fix a sequence of DVRs $(R_0,\mathfrak{m}_0),..., (R_{j(s)},\mathfrak{m}_{j(s)})$ connecting the fields $k(S)$ and $k(s)$ as before. Following the argument of Proposition \ref{prop: reps}, we arrive at a subscheme $V\subset X$ which is geometrically a union of lines. Again, by using \cite[Proposition 3.2]{MR3590349} and comparing degrees, one can check that the cycle class of $V$ represents the class $3c_2(\zeta_X(1))-c_1(\zeta_X(1))^2$. 

There are three possible cases: either $V$ is geometrically the union of two skew lines, $V$ is geometrically the union of two lines contained inside some plane, or $V$ is geometrically a double line. However, since $X$ is associated to a division algebra, the subscheme $V$ is geometrically nondegenerate \cite[Lemma 3.4]{em} leaving only the possibility that $V$ is geometrically the union of two skew lines.
\end{proof}

\section{Twisted Hilbert Schemes}\label{sec: twhilb}
Twisted Hilbert schemes are constructed in \cite{em}. In the setting that we're interested in, we can describe these schemes as twisted forms of Hilbert schemes along a Galois $1$-cocycle. We keep notation as above: $k$ is a given base field, $F/k$ is a finite Galois field extension with Galois group $G=\mathrm{Gal}(F/k)$, and $\xi\in Z^1(G,\mathrm{PGL}_{n+1}(F))$ is a $1$-cocycle.

Let $\phi(t)$ be in $\mathbb{Q}[t]$. Fix an $F$-automorphism $\alpha$ in $\mathrm{Aut}_F(\mathbb{P}^n\times_k F)$. Then $\alpha$ induces an automorphism of the scheme $\Hilb_{\phi(t)}(\mathbb{P}^n\times_k F/F)$ due to the representability of the Hilbert scheme. In more detail, the data of a morphism $V\rightarrow \Hilb_{\phi(t)}(\mathbb{P}^n\times_k F/F)$ is precisely the data of a subscheme $U\subset \mathbb{P}^n\times V$ that is flat and proper over $V$ with Hilbert polynomial $\phi(t)$. By representability, there is a universal subscheme \[\mathbf{Univ}_{\phi(t)}(\mathbb{P}^n\times_k F/F)\subset \mathbb{P}^n\times \Hilb_{\phi(t)}(\mathbb{P}^n\times_k F/F)\]
and $\alpha$ induces the automorphism corresponding to the subscheme \[(\alpha \times \mathrm{Id})(\mathbf{Univ}_{\phi(t)}(\mathbb{P}^n\times_k F/F))\subset \mathbb{P}^n\times \Hilb_{\phi(t)}(\mathbb{P}^n\times_k F/F)\] gotten by composition with $\alpha$.

Identifying $\mathrm{PGL}_{n+1}(F)$ with $\mathrm{Aut}_F(\mathbb{P}^n\times_k F)$, we get a $1$-cocycle $\xi'$ for $G$ with values in $\mathrm{Aut}_F(\Hilb_{\phi(t)}(\mathbb{P}^n\times_k F/F))$ by pushing forward $\xi$. If $X$ is the Severi--Brauer variety that one gets by twisting $\mathbb{P}^n\times_k F$ by $\xi$, then the $\xi$-twisted Hilbert scheme $\Hilb_{\phi(t)}^{tw}(X/k)$ is the scheme that one gets by twisting $\Hilb_{\phi(t)}(\mathbb{P}^n/k)\times_k F$ by $\xi'$. The twisted Hilbert scheme satisfies a representability property similar to the usual Hilbert scheme \cite[Theorem 2.5]{em}.

\begin{rmk}
Let $\mathcal{L}$ be a very ample line bundle on $X$ of degree $m$. Then the twisted Hilbert schemes associated to $X$ are related to the usual Hilbert schemes of $X$, constructed with respect to the complete linear system associated with $\mathcal{L}$, by an isomorphism \[\Hilb^{tw}_{\phi(t)}(X/k)\cong \Hilb_{\phi(mt)}(X/k).\] The benefit to considering the twisted version of these schemes, rather than their untwisted counterparts, is that the twisted versions allow one to avoid the choice of a projective embedding of $X$.
\end{rmk}

In the proof of Theorem \ref{thm: kar}, we used the following result:

\begin{lem}\label{lem: degen}
Let $A$ be a division $k$-algebra and assume $\mathrm{ind}(A)=4$. For a Galois splitting field $F/k$ of $A$, define the rational map $\Lambda$ \[\Lambda:\mathbf{Gr}(2,\Gamma(\mathbb{P}^3_F,\mathcal{O}(2))) \dashrightarrow \Hilb_{4t}(\mathbb{P}^3_F/F)\] by sending a $2$-dimensional subspace $V\subset \Gamma(\mathbb{P}^3_F,\mathcal{O}(2))$ to the vanishing set of those polynomials in $V$ and let \[\Lambda_\xi:\SB_2(S^2(A))\dashrightarrow \Hilb_{4t}^{tw}(X/k)\] be the map that one gets by twisting.

Now assume that $A=Q_1\otimes Q_2$ decomposes into a tensor product of two quaternion algebras and fix a biquadratic Galois extension $F/k$ splitting $A$ and with Galois group $G=\{1,\sigma,\tau,\sigma\tau\}$. Pick a point $x\in X$ with residue field $F$ and label the $F$-points of $x_F$ with elements of $G$. Let $C,D\subset X$ be those curves with $C'=C\times_k F$ and $D'=D\times_k F$ the unions \[C'=L_{1,\sigma}\cup L_{\tau,\sigma\tau} \quad \mbox{and}\quad D'=L_{1,\tau}\cup L_{\sigma,\sigma\tau}\] where $L_{p,q}$ is the line through the points $p$ and $q$.

Then the point $[C\cup D]$ of $\Hilb_{4t}^{tw}(X/k)(k)$ is in the image of $\Lambda_\xi$.
\end{lem}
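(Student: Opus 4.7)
The plan is to exhibit, over the splitting field $F$, an explicit 2-dimensional subspace $V' \subset \Gamma(\mathbb{P}^3_F, \mathcal{O}(2))$ whose common vanishing locus is $C' \cup D'$, and which is stable under the twisted Galois action on $\mathbf{Gr}(2, \Gamma(\mathbb{P}^3_F, \mathcal{O}(2)))$ induced by $\xi$; such a $V'$ descends to a $k$-point of $\SB_2(S^2(A))$ mapping via $\Lambda_\xi$ to $[C \cup D]$.

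First I would choose coordinates $X_0, X_1, X_2, X_3$ on $X_F \cong \mathbb{P}^3_F$ so that the four $F$-points of $x_F$ labeled by $1, \sigma, \tau, \sigma\tau$ become the coordinate points $[1{:}0{:}0{:}0], [0{:}1{:}0{:}0], [0{:}0{:}1{:}0], [0{:}0{:}0{:}1]$, in that order. Then the six lines $L_{\alpha, \beta}$ through pairs of labeled points are the six coordinate axes, and in particular the four lines comprising $C' \cup D'$ are $L_{1,\sigma} = \{X_2 = X_3 = 0\}$, $L_{\tau, \sigma\tau} = \{X_0 = X_1 = 0\}$, $L_{1, \tau} = \{X_1 = X_3 = 0\}$, and $L_{\sigma, \sigma\tau} = \{X_0 = X_2 = 0\}$. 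I would take $V' = \langle X_0 X_3, X_1 X_2 \rangle$; the two reducible quadrics $\{X_0 X_3 = 0\}$ and $\{X_1 X_2 = 0\}$ are opposite pairs of coordinate hyperplanes, and a short case check confirms their intersection is exactly the union of the four axis lines above, i.e., $C' \cup D'$. Since these two quadrics form a regular sequence, the vanishing scheme of $V'$ is a complete intersection of type $(2,2)$ with Hilbert polynomial $4t$, and $V'$ lies in the domain of $\Lambda$ with $\Lambda(V') = [C' \cup D']$.

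It remains to check that $V'$ is stable under the twisted Galois action. The twisted action of $G$ on $X_F \cong \mathbb{P}^3_F$ permutes the four labeled points by left multiplication in $G$, which in our coordinates makes $\sigma$ act as the permutation $(e_0\, e_1)(e_2\, e_3)$, $\tau$ as $(e_0\, e_2)(e_1\, e_3)$, and $\sigma\tau$ as $(e_0\, e_3)(e_1\, e_2)$, with the dual action on the linear forms $X_i$ given by the same transpositions. Both $\sigma$ and $\tau$ then swap the monomials $X_0 X_3$ and $X_1 X_2$, while $\sigma\tau$ fixes each, so $V'$ is Galois-stable. This provides the desired $k$-point of $\SB_2(S^2(A))$ whose image under $\Lambda_\xi$ is $[C \cup D]$.

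The main obstacle is getting the twisted Galois action right: the closed $k$-point $x \in X$ of degree 4 produces an $x_F$ whose $F$-points carry a transitive $G$-action, and it is this permutation of the four coordinate points by $G$ that encodes the $\xi$-twist of the Galois action on $\mathbb{P}^3_F$. Once that identification is set up with a compatible choice of trivialization, the remaining checks are entirely explicit linear algebra in four variables.
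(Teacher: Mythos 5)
Your construction is correct, and its geometric core is in fact the same as the paper's: once the four points of $x_F$ are put at the coordinate points, your pencil $\langle X_0X_3,\,X_1X_2\rangle$ is exactly the pair of split quadrics used in the paper's proof of Lemma \ref{lem: degen}, namely $P(1,\sigma,\tau)\cup P(\sigma,\tau,\sigma\tau)=V(X_0X_3)$ and $P(1,\sigma,\sigma\tau)\cup P(1,\tau,\sigma\tau)=V(X_1X_2)$. Where you genuinely diverge is the rationality step. The paper never descends the pencil: it observes that $\Lambda$, hence $\Lambda_\xi$, is birational onto the irreducible component containing its image, so whether the $k$-point $[C\cup D]$ lies in the image can be checked after base change to $F$, and only the computation over $F$ is then needed. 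You instead prove the stronger statement that the pencil itself is stable under the twisted Galois action and so gives a $k$-point of $\SB_2(S^2(A))$ mapping to $[C\cup D]$; this is essentially the route of the paper's Example following the lemma and of Remark \ref{rmk: rmkref}, done with explicit coordinates instead of Hilbert's Theorem 90. One caveat in your stability argument: in coordinates placing the four points at the coordinate points, the cocycle acts by monomial matrices, not by permutation matrices, so the ``dual action on the $X_i$ by the same transpositions'' is only correct up to scalars; the quadrics $X_0X_3$ and $X_1X_2$ are in general sent to scalar multiples of one another rather than literally swapped (compare the paper's explicit example, where the $k$-rational sections are $x_2x_3+x_1x_4$ and $\sqrt{ab}\,x_2x_3-\sqrt{ab}\,x_1x_4$, and the accompanying observation that the individual plane-pairs do \emph{not} descend). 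Since only the $2$-dimensional span matters, your conclusion is unaffected; alternatively, stability is immediate without any matrix computation because your pencil equals $\HH^0(\mathbb{P}^3_F,\mathcal{I}_{(C\cup D)_F}(2))$, which is intrinsically attached to the $k$-subscheme $C\cup D$ (this is exactly the space $W_F$ of Remark \ref{rmk: rmkref}). With that adjustment your argument is complete, and it buys a small bonus the paper's own proof of the lemma does not give directly: an explicit $k$-rational point of $\SB_2(S^2(A))$ over $[C\cup D]$, rather than mere membership in the image of the rational map.
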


\begin{proof}
The map $\Lambda$ is birational between its domain and an irreducible component of its target. Since this implies the same for $\Lambda_\xi$, it suffices to prove that there is some algebraic field $K/k$ so that $[C_K\cup D_K]$ is in the image of $\Lambda_{\xi}\times_k K$. We'll show that this holds when $K=F$.

Let $P(1,\sigma,\sigma\tau)$ be the plane passing through the points $1,\sigma$ and $\tau$. We note that this makes sense because no three of the points of $x_F$ lie on a line (the $F$-points of $x_F$ span all of $\mathbb{P}^3_F$). Define planes $P(1,\tau,\sigma\tau)$, $P(1,\sigma,\tau)$, and $P(\sigma,\tau,\sigma\tau)$ similarly.

Now we have \[\left(P(1,\sigma,\sigma\tau)\cup P(1,\tau,\sigma\tau)\right) \cap \left(P(1,\sigma,\tau)\cup P(\sigma,\tau,\sigma\tau)\right)=C_F\cup D_F\]
with the left-hand side the vanishing set of two (independent) quadratic polynomials, i.e.\ defining an $F$-point in $\mathbf{Gr}(2,\Gamma({\mathbb{P}^3_F,\mathcal{O}(2)}))$.
\end{proof}

\begin{exmp}
In the above proof, neither of the unions \[P(1,\sigma,\sigma\tau)\cup P(1,\tau,\sigma\tau)\quad \mbox{and}\quad  P(1,\sigma,\tau)\cup P(\sigma,\tau,\sigma\tau)\] define Galois stable sections descending to elements in $\Gamma(X,\zeta_X(2)^\vee)$. However, there are always two linearly independent sections of $\zeta_X(2)^\vee$ with vanishing locus the scheme $C\cup D$. Here's an example.

Let $\mathrm{char}(L)\neq 2$, let $k=L(a,b,c,d)$ for four independent variables $a,b,c,d$ and let $F=k(\sqrt{a},\sqrt{b})$. Let $G=\mathrm{Gal}(F/k)$ be the Galois group with elements $\{1,\sigma,\tau,\sigma\tau\}$ acting on $F$ by \[\sigma(\sqrt{a})=-\sqrt{a},\, \sigma(\sqrt{b})=\sqrt{b},\, \tau(\sqrt{a})=\sqrt{a},\, \mbox{and}\,\tau(\sqrt{b})=-\sqrt{b}.\] Take the $1$-cocycle $\xi$ with values in $\mathrm{PGL}_4(F)$ defined by the following matrices: \[\xi(1)=\begin{pmatrix} 1 & 0 & 0 & 0\\ 0 & 1 & 0 & 0 \\ 0 & 0 & 1 & 0\\ 0 & 0 & 0 & 1\end{pmatrix} \quad \xi(\sigma)=\begin{pmatrix} 0 & 0 & c & 0\\ 0 & 0 & 0 & c \\ 1 & 0 & 0 & 0\\ 0 & 1 & 0 & 0\end{pmatrix}\] \[\xi(\tau)=\begin{pmatrix} 0 & d & 0 & 0\\ 1 & 0 & 0 & 0 \\ 0 & 0 & 0 & d\\ 0 & 0 & 1 & 0\end{pmatrix} \quad \xi(\sigma\tau)=\begin{pmatrix} 0 & 0 & 0 & cd\\ 0 & 0 & c & 0 \\ 0 & d & 0 & 0\\ 1 & 0 & 0 & 0\end{pmatrix}.\]

Inside $\mathbb{P}^3_F$, the points $e_1,e_2,e_3,e_4$ defined by $e_i=(\delta_{ij})_{j=1}^4$ with $\delta_{ij}=1$ if $i=j$ and $\delta_{ij}=0$ otherwise, corresponding to standard basis vectors, form a $G$-orbit under the $\xi$-twisted action. Hence this $G$-orbit descends to a point $x$ with $k(x)=F$ on the Severi--Brauer variety $X$ obtained by twisting $\mathbb{P}^3_F$ along $\xi$.

In this setting, there are sections $\theta,\eta\in \Gamma(X,\zeta_X(2)^\vee)$ so that the vanishing loci intersect to $V(\theta\times_k F)\cap V(\eta\times_k F)=C_F\cup D_F$ over $F$. Moreover, the sections $\theta,\eta$ are determined by \[\theta\times_k F=x_2x_3+x_1x_4\quad \mbox{and} \quad \eta\times_k F= (\sqrt{ab})x_2x_3-(\sqrt{ab})x_1x_4\] with $x_1,x_2,x_3,x_4$ the usual coordinate sections on $\mathbb{P}^3_F$.
\end{exmp}

\begin{lem}\label{lem: ellh}
Keep the notation of Lemma \ref{lem: degen}. Suppose that $E\subset X$ is a smooth curve which is geometrically the intersection of two quadric surfaces. Then the point $[E]$ of $\Hilb_{4t}^{tw}(X/k)(k)$ is in the image of $\Lambda_\xi$.
\end{lem}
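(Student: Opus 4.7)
The plan is to exhibit a $k$-subspace $W\subset \Gamma(X,\zeta_X(2)^\vee)$ of dimension $2$ which lies in the domain of $\Lambda_\xi$ and whose image under $\Lambda_\xi$ is the point $[E]$.

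First, I would set $W\subset \Gamma(X,\zeta_X(2)^\vee)$ to be the $k$-vector subspace of sections whose vanishing locus contains $E$. Under the identification $\Gamma(X,\zeta_X(2)^\vee)\otimes_k F\cong \Gamma(\mathbb{P}^3_F,\mathcal{O}(2))$, which comes from the splitting of $S^2(A)$ over $F$, the base change $W_F$ is precisely the $F$-space of quadrics in $\mathbb{P}^3_F$ vanishing on $E_F$. Since $E$ is geometrically a smooth elliptic normal quartic in $\mathbb{P}^3$, the standard description of the ideal of such a curve gives $h^0(\mathbb{P}^3_{k^{alg}},\mathcal{I}_{E_{k^{alg}}}(2))=2$, and hence $\dim_F W_F=2$ by flat base change.

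Next, since $W_F$ is Galois-stable inside $\Gamma(X,\zeta_X(2)^\vee)\otimes_k F$, Galois descent for subspaces (essentially Hilbert's Theorem 90 applied to $\mathrm{GL}_2$, in the spirit of Remark \ref{rmk: rmkref}) yields $W_F=W\otimes_k F$, and in particular $\dim_k W=2$. Thus $W$ defines a $k$-point of $\mathbf{Gr}(2,\Gamma(X,\zeta_X(2)^\vee))=\SB_2(S^2(A))$. To see that this point lies in the domain of $\Lambda_\xi$, it suffices to check over $F$ that the two spanning quadrics of $W_F$ share no common linear factor; if they did, then $W_F=\ell\cdot U$ for a linear form $\ell$ and some $2$-dimensional $U\subset \Gamma(\mathbb{P}^3_F,\mathcal{O}(1))$, forcing $E_F\subset V(\ell)$. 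But a nondegenerate smooth elliptic quartic in $\mathbb{P}^3$ spans all of $\mathbb{P}^3$, so it is contained in no plane, a contradiction.

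Finally, $\Lambda_\xi(W)$ base-changes over $F$ to the vanishing locus of $W_F$, which is a complete intersection of two quadrics and hence has degree $4$; since it contains the reduced degree-$4$ curve $E_F$, it must equal $E_F$. Thus $\Lambda_\xi(W)=[E]$ in $\Hilb_{4t}^{tw}(X/k)(k)$, as required. The main point here is the descent step, but given the fixed dimension $h^0(\mathcal{I}_E(2))=2$, everything else is essentially bookkeeping with dimensions of quadrics through $E$.
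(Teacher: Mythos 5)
Your proof is correct, but it is not the paper's argument. The paper's proof of Lemma \ref{lem: ellh} is a one-liner: it invokes the reduction already set up in Lemma \ref{lem: degen} (namely that $\Lambda_\xi$ is birational onto the component $\mathrm{Ell}_4(X)$, so membership in the image may be checked after base change to the splitting field $F$), and over $F$ the point $[E_F]$ lies in the image of $\Lambda$ "by assumption," since $E$ is geometrically a complete intersection of two quadrics. You instead bypass that reduction entirely and produce an explicit $k$-rational preimage: you descend the two-dimensional space $W_F=\HH^0(\mathcal{I}_{E_F}(2))$ of quadrics through $E_F$ to a $k$-point of $\mathbf{Gr}(2,\Gamma(X,\zeta_X(2)^\vee))=\SB_2(S^2(A))$, check it lies in the domain of $\Lambda_\xi$ (no common linear factor, by nondegeneracy of the elliptic quartic), and identify its image with $[E]$ by a degree/Hilbert-polynomial comparison. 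This is exactly in the spirit of the referee's Hilbert 90 argument in Remark \ref{rmk: rmkref}, applied to $E$ rather than to $C\cup D$. What your route buys is a genuinely $k$-rational point in the fiber over $[E]$, with no appeal to the birationality of $\Lambda_\xi$; what the paper's route buys is brevity, since the reduction to the split case was already established for Lemma \ref{lem: degen}. One small caveat: your identifications $\Gamma(X,\zeta_X(2)^\vee)\otimes_k F\cong\Gamma(\mathbb{P}^3_F,\mathcal{O}(2))$ and $\SB_2(S^2(A))\cong\mathbf{Gr}(2,\Gamma(X,\zeta_X(2)^\vee))$ use that $\zeta_X(2)$ is a line bundle and $S^2(A)$ is split, i.e.\ that $\exp(A)=2$; this is the biquaternion setting in which the lemma is applied (and in which curves $E$ as in Lemma \ref{lem: ell} exist), but it is worth stating explicitly since the first paragraph of Lemma \ref{lem: degen} only assumes $\mathrm{ind}(A)=4$.
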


\begin{proof}
As in the proof of Lemma \ref{lem: degen}, it suffices to check that $[E_F]$ is in the image of $\Lambda$ and this is true by assumption.	
\end{proof}

Consider now the twisted Hilbert scheme $\Hilb_{2t+2}^{tw}(X/k)$. This space has two irreducible components. Geometrically, the general point of one of these components parametrizes two skew lines and the general point of the other parametrizes unions of a conic and an isolated point. Both of these irreducible components are smooth.

We write $\mathrm{Skew}(X)$ for the irreducible component of $\Hilb_{2t+2}^{tw}(X/k)$ which parametrizes those subschemes of $X$ that are geometrically the union of two skew lines. There is an isomorphism \[\mathrm{Skew}(X)\cong \mathrm{Bl}_{\Delta}(S^2(\SB_2(A)))\] with the blow up of the symmetric square of the second generalized Severi--Brauer variety associated to $A$ along the diagonal. To see this one can consider the Galois action over an algebraic closure and appeal to \cite[Theorem 1.1 (3)]{MR2834144}.

In particular, the scheme $\mathrm{Skew}(X)$ contains, as an open subscheme, the variety $\acute{e}t_2(A)$ constructed in \cite{MR2601007}. There the scheme $\acute{e}t_2(A)$ is constructed as follows. First, let $V=\SB_2(A)\times \SB_2(A)$ and consider the open subscheme $V_*$ parametrizing pairs of ideals $(I,J)$ such that $I\cap J=0$ and $I+J=A$. The symmetric group $S_2$ acts on $V_*$ freely, and $\acute{e}t_2(A)$ is defined as the quotient $\acute{e}t_2(A)=V_*/S_2$.

In \cite[Theorem 6.6]{MR2601007} it's shown that $\acute{e}t_2(A)$ is $R$-trivial when the characteristic of $k$ is not $2$. In fact, this result is also true if the characteristic of $k$ is $2$.

\begin{prop}\label{prop: rtriv}
Suppose that $A$ is a biquaternion division algebra. Then $\acute{e}t_2(A)$ is $R$-trivial, i.e.\ for any field extension $F/k$ and for any two points $x,y\in \acute{e}t_2(A)(F)$ there is a rational map $\phi:\mathbb{P}^1\dashrightarrow \acute{e}t_2(A)$ so that $x=\phi(F)(p)$ and $y=\phi(F)(q)$ for some $p,q\in \mathbb{P}^1(F)$.
\end{prop}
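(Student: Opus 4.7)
My plan is to realize $\acute{e}t_2(A)_F$, for any field extension $F/k$, as the quotient of the group $\mathrm{GL}_1(A_F)$ by the action of a stabilizer subgroup, and then deduce $R$-triviality from the rationality of $\mathrm{GL}_1$ of a central simple algebra. This approach would work uniformly in all characteristics, extending in particular the characteristic $\neq 2$ result of \cite{MR2601007}.

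First I would reduce to the case that $\acute{e}t_2(A)(F)$ is nonempty. An $F$-point of $\acute{e}t_2(A)$ gives a decomposition $A_F=I\oplus J$ into right ideals of reduced rank $2$, which forces $\mathrm{ind}(A_F)\mid 2$. Since $A$ is biquaternion of exponent $2$, Wedderburn theory then yields $A_F\cong M_2(D)$ for some quaternion $F$-algebra $D$ (possibly split). Right ideals of $M_2(D)$ of reduced rank $2$ correspond to $D$-lines in $D^2$, and complementary pairs correspond to unordered decompositions $D^2=W_1\oplus W_2$. The group $\mathrm{GL}_1(A_F)=\mathrm{GL}_2(D)$ acts transitively on such decompositions by change of basis, and the stabilizer of a fixed decomposition is the normalizer $(D^\times\times D^\times)\rtimes S_2$ of the diagonal $F$-subgroup.

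Given $x,y\in \acute{e}t_2(A)(F)$, I would lift them under the surjection $\mathrm{GL}_1(A_F)(F)\twoheadrightarrow \acute{e}t_2(A)(F)$ to elements $g_x,g_y\in \mathrm{GL}_1(A_F)(F)$. Since $\mathrm{GL}_1(A_F)$ is the open subvariety $\{\mathrm{Nrd}\neq 0\}$ of the affine space underlying $A_F\cong \mathbb{A}^{16}_F$, it is rational over $F$. In particular, the affine line through $\mathrm{id}$ and $g_x^{-1}g_y$ inside $A_F$ meets $\mathrm{GL}_1(A_F)$ in a dense open set, giving a rational map $\mathbb{A}^1\dashrightarrow \mathrm{GL}_1(A_F)$ that sends $0\mapsto \mathrm{id}$ and $1\mapsto g_x^{-1}g_y$. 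Composing with the orbit map $g\mapsto g_x g\cdot x$ then yields the desired rational map $\mathbb{P}^1\dashrightarrow \acute{e}t_2(A)_F$ taking $0$ to $x$ and $1$ to $y$.

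The main obstacle is the second step: rigorously identifying $\acute{e}t_2(A)_F$ with the stated quotient and verifying that the orbit map $\mathrm{GL}_1(A_F)\to \acute{e}t_2(A)_F$ is a morphism which is surjective on $F$-points. This requires the Wedderburn decomposition to be understood scheme-theoretically and some care with the stabilizer structure, but introduces no characteristic-dependent obstruction. The reason this argument would succeed in characteristic $2$ (where the original proof in \cite{MR2601007} does not apply) is that it sidesteps the theory of quadratic étale algebras entirely, relying only on the rationality of $\mathrm{GL}_1$ of a central simple algebra, which holds unconditionally.
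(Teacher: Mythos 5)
There is a genuine gap, and it occurs at the very first reduction. You claim that an $F$-point of $\acute{e}t_2(A)$ gives a decomposition $A_F=I\oplus J$ into right ideals of reduced rank $2$ defined over $F$, forcing $\mathrm{ind}(A_F)\mid 2$. This is false: $\acute{e}t_2(A)$ is the quotient $V_*/S_2$ by a free $S_2$-action, and the quotient map is not surjective on $F$-points. An $F$-point of $\acute{e}t_2(A)$ corresponds in general to a Galois-stable \emph{unordered} pair $\{I,\bar{I}\}$ of complementary ideals defined only over a quadratic \'etale extension of $F$ --- equivalently, to a quadratic \'etale $F$-subalgebra of $A_F$ --- and such points exist even when $A_F$ is division. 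Indeed the paper relies on exactly this: by Albert's theorem $\acute{e}t_2(A)(k)\neq\emptyset$ while $A$ itself is a division algebra, and the curves $C,D$ in the main proof arise from such points. So the case your argument discards ($A_F$ division, e.g.\ $F=k$) is precisely the case that matters, and there is no Wedderburn reduction to $M_2(D)$ available. A second, independent problem is that even where $A_F\cong M_2(D)$, exhibiting $\acute{e}t_2(A)_F$ as a homogeneous space for $\mathrm{GL}_1(A_F)$ does not let you lift $x,y$ to group elements: the stabilizer is the normalizer of $\mathrm{GL}_1(D)\times\mathrm{GL}_1(D)$, which is not special, so the orbit map need not be surjective on $F$-points. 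Concretely, the $\mathrm{GL}_1(A_F)(F)$-orbits on $\acute{e}t_2(A)(F)$ are (by Skolem--Noether) the isomorphism classes of quadratic \'etale subalgebras of $A_F$, of which there are many; two points $x,y$ in different orbits admit no lifts $g_x,g_y$ with $y=g_xg\cdot x$, so R-triviality does not follow from rationality of $\mathrm{GL}_1(A)$ alone.

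For comparison, the paper's proof (for the remaining characteristic $2$ case) avoids both issues by never trying to lift through the $S_2$-quotient or through a group action. Using the canonical symplectic involution $\sigma$ on $A$, one identifies $\SB_2(A)$ with the Pfaffian quadric $V(\mathrm{Nrp}_\sigma)\subset\mathbb{P}(\mathrm{Symd}(A,\sigma))$, where $\mathrm{Symd}(A,\sigma)$ is a $6$-dimensional $k$-vector space; the $S_2$-invariant map sending an unordered pair of distinct points of the quadric to the line they span in $\mathbb{P}(\mathrm{Symd}(A,\sigma))$ is defined over $k$ on the quotient and realizes $\acute{e}t_2(A)$ as an open subvariety of the split Grassmannian $\mathbf{Gr}(2,\mathrm{Symd}(A,\sigma))$. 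R-triviality then follows from R-triviality of the Grassmannian, with no need for rational points to lift anywhere. If you want to salvage your idea, you would have to work with the twisted (nonabelian-cohomological) description of points of $V_*/S_2$ rather than with genuine ordered decompositions, at which point you are essentially forced back to an argument like the one in the paper or in \cite[Theorem 6.6]{MR2601007}.
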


\begin{proof}
It suffices to prove the remaining case, i.e.\ assuming that the characteristic of $k$ is $2$. The proof is nearly identical to that of \cite[Theorem 6.6]{MR2601007} with only minor adjustments. The algebra $A$ is equipped with a canonical symplectic involution $\sigma$ \cite[Proposition 2.23]{MR1632779}. The set $\mathrm{Symd}(A,\sigma)=\{a+\sigma(a): a\in A\}$ of symmetrized elements of $A$ is a 6-dimensional $k$-vector space \cite[Proposition 2.6 (2)]{MR1632779}. 

For every right ideal $I\subset A$ of reduced dimension $2$, the intersection $I\cap \mathrm{Symd}(A,\sigma)$ is a $1$-dimensional subspace of $\mathrm{Symd}(A,\sigma)$ which is isotropic for the quadratic Pfaffian norm form $\mathrm{Nrp}_\sigma$ on $\mathrm{Symd}(A,\sigma)$. This assignment defines an isomorphism \cite[Proposition 15.20]{MR1632779} between $\SB_2(A)$ and the quadric $V(\mathrm{Nrp}_\sigma)\subset \mathbb{P}(\mathrm{Sym}(A,\sigma))$.

Let $W$ be the quotient $((V(\mathrm{Nrp}_\sigma)\times V(\mathrm{Nrp}_\sigma))\setminus \Delta)/S_2$. Define a map $W\rightarrow \mathbf{Gr}(2,\mathrm{Symd}(A,\sigma))$ by sending a pair $(x,y)$ to the plane spanned by both of these points. One can check, by extending scalars to an algebraic closure $k^{alg}$, that this map realizes $W$ as an open subvariety of the Grassmannian $\mathbf{Gr}(2,\mathrm{Symd}(A,\sigma))$. Now $\acute{e}t_2(A)$ is open in $W$, and therefore also open in this Grassmannian. Since the Grassmannian variety is $R$-trivial, this implies that $\acute{e}t_2(A)$ is as well.
\end{proof}

The variety $\acute{e}t_2(A)$ inside $\mathrm{Skew}(X)$ is exactly the open subscheme whose points correspond to subschemes of $X$ which are geometrically a pair of skew lines. From Albert's theorem \cite[\S XI.6 Theorem 9]{MR0000595}, the set of $k$-rational points of $\acute{e}t_2(A)(k)$ is nonempty and from \cite[Proposition 6.2]{MR2601007} this implies that $\acute{e}t_2(A)$ is unirational. As in the proof of Theorem \ref{thm: kar}, the above Proposition \ref{prop: rtriv} implies that all of those subschemes of $X$ which correspond to fibers of these $k$-points of $\acute{e}t_2(A)$ represent the same cycle class in $\CH_1(X)$. Hence we've proved:

\begin{cor}\label{cor: curvescd}
Suppose that $A=Q_1\otimes Q_2$ is a biquaternion algebra. Let $F/k$ be a biquadratic Galois splitting field for $A$ with Galois group $G=\mathrm{Gal}(F/k)$. Choose generators $\sigma$ and $\tau$ so that $G=\{1,\sigma,\tau,\sigma\tau\}$. Pick a closed point $x\in X$ with residue field $k(x)=F$ and identify the $F$-points of $x_F$ compatibly with elements of $G$. 

Let $L_{\sigma,\sigma\tau}\subset X_F$ denote the line passing through $\sigma$ and $\sigma\tau$ and define the lines $L_{1,\sigma}$, $L_{\tau,\sigma\tau}$, and $L_{1,\tau}$ similarly. Let $C,D\subset X$ be the curves, which exist by descent, so that $C\times_k F=C'$ and $D\times_k F=D'$ where \[ C'=L_{1,\sigma}\cup L_{\tau,\sigma\tau} \quad \mbox{and}\quad D'=L_{1,\tau}\cup L_{\sigma,\sigma\tau}.\] Then there is an equality \[[C]=3c_2(\zeta_X(1))-c_1(\zeta_X(1))^2=[D]\] inside $\CH_1(X)$.
\end{cor}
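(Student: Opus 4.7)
The plan is to combine the $R$-triviality of $\acute{e}t_2(A)$ from Proposition \ref{prop: rtriv} with Lemma \ref{lem: spec1}, which already produces one subscheme $V \subset X$ that is geometrically a pair of skew lines and represents the class $3c_2(\zeta_X(1))-c_1(\zeta_X(1))^2$. First I would verify that $C$ and $D$ themselves correspond to $k$-rational points of the open subscheme $\acute{e}t_2(A) \subset \mathrm{Skew}(X)$. Since $X$ is associated to a division algebra and $x$ is a closed point of $X$ with residue field $F$, the four $F$-points of $x_F$ span $\mathbb{P}^3_F \cong X_F$, so any two disjoint two-element subsets of these four points determine two skew lines. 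In particular $C' = L_{1,\sigma}\cup L_{\tau,\sigma\tau}$ and $D' = L_{1,\tau}\cup L_{\sigma,\sigma\tau}$ are geometrically pairs of skew lines, so that $[C]$, $[D]$, and $[V]$ all define $k$-rational points of $\acute{e}t_2(A)$.

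It then suffices to show $[C]=[V]$ in $\CH_1(X)$, since the argument for $[D]=[V]$ is identical. By Proposition \ref{prop: rtriv}, there is a rational map $\phi: \mathbb{P}^1 \dashrightarrow \acute{e}t_2(A)$ with $\phi(p_0)=[C]$ and $\phi(p_1)=[V]$ for some $p_0,p_1 \in \mathbb{P}^1(k)$. I would extend $\phi$ to an honest morphism $\tilde\phi: \mathbb{P}^1 \to \mathrm{Skew}(X) \subset \Hilb_{2t+2}^{tw}(X/k)$ using smoothness of $\mathbb{P}^1$ and properness of the twisted Hilbert scheme via the valuative criterion. Pulling back the universal family along $\tilde\phi$ yields a flat family $\pi: S \to \mathbb{P}^1$ together with a morphism $\rho: S \to X$ that is a closed immersion on every fibre, with $\rho(\pi^{-1}(p_0))=C$ and $\rho(\pi^{-1}(p_1))=V$ (these identifications persist after extension because $[C]$ and $[V]$ were already in the domain of $\phi$, so $\tilde\phi$ agrees with $\phi$ there). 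Since $[p_0]=[p_1]$ in $\CH_0(\mathbb{P}^1)$ and $\pi$ is flat, we conclude
\[ [C] \;=\; \rho_*\pi^*[p_0] \;=\; \rho_*\pi^*[p_1] \;=\; [V] \]
in $\CH_1(X)$, and the same argument with $D$ in place of $C$ finishes the proof.

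The main obstacle I anticipate is technical rather than conceptual: ensuring that the rational map produced by $R$-triviality can be extended to a morphism of $\mathbb{P}^1$ without shifting its values at the points $p_0, p_1$. This is what forces me to pass from $\acute{e}t_2(A)$ to its compactification $\mathrm{Skew}(X)$, and to rely on the properness of the ambient twisted Hilbert scheme. Once the extension is established, the specialization of cycle classes along a flat family over $\mathbb{P}^1$ is routine.
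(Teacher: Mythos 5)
Your proposal is correct and follows essentially the same route as the paper: the paper likewise combines Lemma \ref{lem: spec1} with the $R$-triviality of $\acute{e}t_2(A)$ from Proposition \ref{prop: rtriv}, and obtains the equality of cycle classes by the same flat-family argument over $\mathbb{P}^1$ through the universal family on $\Hilb^{tw}_{2t+2}(X/k)$ that appears in the proof of Theorem \ref{thm: kar}. The details you spell out --- that $C$ and $D$ are geometrically skew (since the four $F$-points of $x_F$ span $X_F\cong\mathbb{P}^3_F$) and that the rational map extends to $\mathrm{Skew}(X)$ by properness without changing its values at $p_0,p_1$ --- are exactly the points the paper leaves implicit.
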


\begin{proof}
We proved in Lemma \ref{lem: spec1} that there is a subscheme $V$ defining a point of $\acute{e}t_2(A)$ with $[V]=3c_2(\zeta_X(1))-c_1(\zeta_X(1))^2$. Both $C$ and $D$ also define $k$-points of $\acute{e}t_2(A)$, hence $[C]=[V]=[D]$.  
\end{proof}

\bibliographystyle{amsalpha}
\bibliography{bib}
\end{document}